\journal{}
\newtheorem{cor}{\bf Corollary}[section]
\theoremstyle{remark}
\numberwithin{equation}{section}
\newtheorem{theorem}{\bf Theorem}[section]
\newtheorem{definition}{\bf Definition}[section]
\newtheorem{lemma}{\bf Lemma}[section]
\numberwithin{table}{section}
\def \ri {{\rm i}}
\newcommand{\bs}[1]{\boldsymbol{#1}}
\begin{document}

	\begin{frontmatter}

		\title{High-order implicit Galerkin-Legendre spectral method for the two-dimensional Schr\"{o}dinger equation\tnoteref{label1}}
		
		\tnotetext[label1]{``The work of  authors was partially supported by the National Natural Science Foundation of China (11271100, 11301113, 51476047, 11601103) and the Fundamental Research Funds for the Central Universities (Grant No. HIT. IBRSEM. A. 201412), Harbin Science and Technology Innovative Talents Project of Special Fund (2013RFXYJ044)."}

	\author {Wenjie Liu$^a$, Boying Wu$^a$}

	\address[fn1]{Department of Mathematics, Harbin  Institute of Technology, 150001, China}

		\begin{abstract}
		In this paper, we propose Galerkin-Legendre spectral method
		with implicit Runge-Kutta method
		for solving the unsteady two-dimensional Schr\"{o}dinger equation with nonhomogeneous Dirichlet boundary conditions and initial condition.
		We apply a Galerkin-Legendre spectral method for discretizing spatial derivatives, and then
		employ the implicit Runge-Kutta method for the time
		integration of the resulting linear first-order system
		of ordinary differential equations in complex domain.
		We derive the spectral rate of convergence for the proposed method in the $L^2$-norm for the semidiscrete formulation.
		Numerical experiments show our formulation have high-order accurate.
		\end{abstract}
		
		
		\begin{keyword}
				Two-dimensional Schr\"{o}dinger equation
			\sep
			Galerkin-Legendre spectral method
			\sep
			Implicit Runge-Kutta method
			\sep
			Error estimate
		\end{keyword}
		
	\end{frontmatter}
	
	\section{Introduction}
	In this paper, we introduce  Galerkin-Legendre spectral method
	for the two-dimensional Schr\"{o}dinger equation
	\begin{subequations}\label{Introduction-1}
		\begin{align}
		\label{Introduction-1new}
		- \ri\frac{ \partial
			u}{\partial t}= \Delta u+\psi(x,y) u,\ (x,y,t) \in (c,d)\times (c,d)\times (t_0,T),
		\end{align}
		with the initial condition
		\begin{align}
		\label{Introduction-2}
		u(x,y,t_0) = \varphi(x,y) ,\ (x,y) \in \Omega = (c,d)\times (c,d),
		\end{align}
		and the Dirichlet boundary conditions
		\begin{equation}\begin{split}
		\label{Introduction-3}
		&u(x,c,t) = g_1(x,t),\ u(x,d,t) = g_2(x,t),\; (x,t)\in(c,d)\times (t_0,T) ,\\
		&u(c,y,t) = g_3(y,t),\ u(d,y,t) = g_4(y,t),\; (y,t)\in(c,d)\times (t_0,T) ,\\
		\end{split}\end{equation}
	\end{subequations}
	where $u$ is a complex-valued function, $\psi$ is a smooth known real function, $ \varphi$ and $\{g_j\}_{j=1}^{4}$ are smooth known complex functions,  and $\ri =\sqrt{-1}$.
	
	The Schr\"{o}dinger equation is
	a famous equation used widely in many fields of physics \cite{Mohebbi2009paper,Abdur2011paper,Dehghan2013paper},
	such as quantum mechanics, quantum
	dynamics calculations, optics, underwater acoustics, plasma physics and electromagnetic wave propagation.
	
	Over the past few years, several numerical schemes have been
	developed for solving problem \eqref{Introduction-1}.
	For instance,
	In \cite{Subasi2002paper}
	Subasi used a standard finite difference method in space for solving the problem \eqref{Introduction-1}.
	A Crank-Nicolson method discretization for the time of the problem \eqref{Introduction-1} was considered in in \cite{Antoine2004paper}.
	An implicit semi-discrete higher order compact (HOC) scheme was considered for computing
	the solution of the problem \eqref{Introduction-1} in \cite{Kalita2006paper}.
	Dehghana and Shokriin \cite{Dehghan2007paper} proposed a numerical scheme
	to solve the problem \eqref{Introduction-1} by collocation and radial basis functions in space.
	A meshless local boundary integral equation (LBIE) method to solve the problem \eqref{Introduction-1} was given
	in \cite{Dehghan2008paper}.
	In \cite{Mohebbi2009paper} Mohebbi and Dehghan presented a high order method for the problem \eqref{Introduction-1} by
	the compact finite difference in space
	and boundary value method in time.
	In \cite{Tian2010paper} Tian and Yu
	proposed a HOC-ADI method to solve the problem \eqref{Introduction-1}, which has fourth-order accuracy in
	space and second-order accuracy in time.
	Gao and Xie \cite{Gao2011paper} proposed a numerical scheme
	to solve the problem \eqref{Introduction-1}
	by the ADI compact finite difference
	scheme.
	For the spatial discretisation of the problem \eqref{Introduction-1} by
	the Chebyshev spectral collocation
	method
	considered in \cite{Abdur2011paper}.
	In \cite{Dehghan2013paper}
	Dehghan and Emami-Naeini presented Sinc-collocation and Sinc-Galerkin methods to solve the
	problem \eqref{Introduction-1}.

	In this paper, we propose a numerical scheme for solving \eqref{Introduction-1}.
	We apply Galerkin-Legendre spectral method \cite{Shen2006Book,Shen2011Book,shj1}
	for discretizing spatial derivatives, then using
	the implicit Runge-Kutta method for time derivatives,
	which is high-order accurate both in space and time. 

	The contents of the article is as follows.
	In Section 2, we introduce the implicit Runge-Kutta method for the system of ordinary differential equations in the complex domain.
	In Section 3, we describe the Galerkin-Legendre spectral
	method for the two-dimensional Schr\"{o}dinger equation in space.
	In Section 4, we derive a priori error estimates in the $L^2$-norm
	for the semidiscrete formulation. The analysis relies on an idea suggested by
	Lions et al. \cite{Lions1972Book} and Thom\'{e}e \cite{Thomee2006Book}.
	In Section 5, we report the numerical experiments of solving
	two-dimensional Schr\"{o}dinger equation with the new method developed in this paper,
	and compare the numerical results with analytical solutions and with other method in the literature \cite{Mohebbi2009paper}.
	We end this article with some concluding remarks
	in Section 6.
	\section{Implicit Runge-Kutta method}
	
	In this section we modifed the implicit Runge-Kutta (IRK) method to solve first-order 
	linear complex ordinary differential equations.
	We first give the definition of Kronecker product of matrices.
	\begin{definition}[see, e.g., \cite{Steeb2006Book}]
		Let $\bs A = (a_{ij})_{m\times n}$ and $\bs B$ be arbitrary matrices, then the matrix
		$$\bs A \otimes\bs B =\left(
		\begin{array}{cccc}
		a_{11}\bs B & a_{12}\bs B & \ldots & a_{1n}\bs B\\
		a_{21}\bs B & a_{22}\bs B & \ldots & a_{2n}\bs B \\
		\vdots & \vdots & \ddots &\vdots \\
		a_{m1}\bs B & a_{m2}\bs B & \ldots& a_{mn}\bs B \\
		\end{array}
		\right),
		$$
		is called the Kronecker product of $\bs A$ and $\bs B$.
	\end{definition}
	\begin{definition}[see, e.g., \cite{Steeb2006Book}]
		Let $\bs A=(a_{ij})_{m\times n}$ be any given matrix, then ${\rm vec}(\bs A)$ is defined to be a column
		vector of size $m \times n$ made of the row of
		$$
		{\rm vec}(\bs A) = (a_{11},a_{12},\ldots,a_{1n},a_{21},a_{22},\ldots,a_{m1},\ldots,a_{mn})^T.
		$$
	\end{definition}
	
	\begin{lemma}[see, e.g., \cite{Steeb2006Book}]
		\label{lemmasp1}
		Let $\bs A =(a_{ij})_{m\times n}$, $\bs B =(b_{ij})_{n\times p}$, and $\bs C =(c_{ij})_{p\times q}$ be three given matrices. Then
		$$
		{\rm vec}(\bs A\bs B\bs C) =(\bs A\otimes \bs C^T){\rm vec}(\bs B).
		$$
	\end{lemma}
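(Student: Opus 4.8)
The plan is to verify the identity entrywise, comparing the components of the two sides after spelling out the action of the Kronecker product in the row-major ordering fixed by the preceding definition of $\mathrm{vec}$. Write $\bs A=(a_{ij})_{m\times n}$, $\bs B=(b_{jl})_{n\times p}$, $\bs C=(c_{lk})_{p\times q}$, so that $\bs A\bs B\bs C$ is $m\times q$ and $\mathrm{vec}(\bs A\bs B\bs C)$ has length $mq$, while $\bs A\otimes\bs C^T$ is $(mq)\times(np)$ and $\mathrm{vec}(\bs B)$ has length $np$; the dimensions thus match, and it remains only to match the entries.

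First I would record the two indexing conventions forced by the definition of $\mathrm{vec}$: the entry of $\mathrm{vec}(\bs A\bs B\bs C)$ in position $(i-1)q+k$ equals $(\bs A\bs B\bs C)_{ik}$, and the entry of $\mathrm{vec}(\bs B)$ in position $(j-1)p+l$ equals $b_{jl}$. Next, reading off the block structure $\bs A\otimes\bs C^T=(a_{ij}\bs C^T)$, the entry of $\bs A\otimes\bs C^T$ in row $(i-1)q+k$ and column $(j-1)p+l$ is $a_{ij}(\bs C^T)_{kl}=a_{ij}c_{lk}$. Then I would just multiply out: the $((i-1)q+k)$-th component of $(\bs A\otimes\bs C^T)\mathrm{vec}(\bs B)$ is
\[
\sum_{j=1}^{n}\sum_{l=1}^{p} a_{ij}c_{lk}\,b_{jl}
=\sum_{j=1}^{n}\sum_{l=1}^{p} a_{ij}b_{jl}c_{lk}
=(\bs A\bs B\bs C)_{ik},
\]
which is exactly the $((i-1)q+k)$-th component of $\mathrm{vec}(\bs A\bs B\bs C)$. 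Since $i$ and $k$ range over all admissible values, the two vectors agree in every position.

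The computation itself is routine; the one place where care is genuinely needed is the index bookkeeping, because the definition adopted here stacks the \emph{rows} of a matrix rather than its columns. This row-major convention is what produces $\bs A\otimes\bs C^T$, with the transpose on the last rather than the first factor, in contrast to the more familiar column-stacking identity $\mathrm{vec}(\bs A\bs B\bs C)=(\bs C^T\otimes\bs A)\mathrm{vec}(\bs B)$; keeping the flattening map $(i,k)\mapsto(i-1)q+k$ consistent on both sides is therefore the main thing to watch. Alternatively, one could first reduce to the rank-one case $\bs B=\bs u\bs v^T$, for which $\mathrm{vec}(\bs u\bs v^T)=\bs u\otimes\bs v$ and $\bs A\bs B\bs C=(\bs A\bs u)(\bs C^T\bs v)^T$, and then invoke the mixed-product rule $(\bs A\bs u)\otimes(\bs C^T\bs v)=(\bs A\otimes\bs C^T)(\bs u\otimes\bs v)$; linearity of $\mathrm{vec}$ then extends the conclusion to a general $\bs B$.
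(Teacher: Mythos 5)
Your proof is correct. Note that the paper itself gives no proof of this lemma---it is quoted from \cite{Steeb2006Book} as a known fact---so there is no in-paper argument to compare against; your entrywise verification is the standard one. The index bookkeeping is right: with the row-stacking convention of the preceding definition, the $(i,k)$ entry of the $m\times q$ product sits in position $(i-1)q+k$, the $(j,l)$ entry of $\bs B$ sits in position $(j-1)p+l$, and the $\bigl((i-1)q+k,(j-1)p+l\bigr)$ entry of $\bs A\otimes\bs C^T$ is $a_{ij}c_{lk}$, so the sum $\sum_{j,l}a_{ij}b_{jl}c_{lk}=(\bs A\bs B\bs C)_{ik}$ closes the argument. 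You are also right to flag that the row-major convention is what places the transpose on the second Kronecker factor, in contrast to the column-stacking identity $\mathrm{vec}(\bs A\bs B\bs C)=(\bs C^T\otimes\bs A)\mathrm{vec}(\bs B)$; this is the one point where a careless reader could go wrong. Your alternative route through rank-one matrices is likewise valid and has the merit of reusing the identity $\mathrm{vec}(\bs u\bs v^T)=\bs u\otimes\bs v$ (the paper's Lemma \ref{lemmasp2}) together with the mixed-product rule, at the cost of invoking that rule as a separate known fact.
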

	
	\begin{lemma}[see, e.g., \cite{Steeb2006Book}]
		\label{lemmasp2} Let $\bs A =(a_{ij})_{m\times 1}$, $\bs B =(b_{ij})_{n\times
			1}$. Then
		$$
		{\rm vec}(\bs A\bs B^T) =\bs A\otimes \bs B.
		$$
	\end{lemma}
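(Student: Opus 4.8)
The plan is to obtain this identity as a direct specialization of Lemma \ref{lemmasp1}, which avoids any appeal to componentwise bookkeeping. First I would record the shapes involved: since $\bs A$ is $m\times 1$ and $\bs B^T$ is $1\times n$, the product $\bs A\bs B^T$ is a well-defined $m\times n$ (rank-one) matrix with $(i,j)$ entry $a_{i1}b_{j1}$, so that ${\rm vec}(\bs A\bs B^T)$ is a column vector of length $mn$. The key idea is to insert the $1\times 1$ identity matrix $\bs I_1=(1)$ and factor $\bs A\bs B^T=\bs A\,\bs I_1\,\bs B^T$. This exhibits $\bs A\bs B^T$ as a product of three conformable matrices of sizes $m\times 1$, $1\times 1$, and $1\times n$, which is exactly the configuration covered by Lemma \ref{lemmasp1}.

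Next I would apply Lemma \ref{lemmasp1} with the assignments $\bs A\mapsto\bs A$, $\bs B\mapsto\bs I_1$, and $\bs C\mapsto\bs B^T$, giving
\[
{\rm vec}(\bs A\,\bs I_1\,\bs B^T)=\big(\bs A\otimes(\bs B^T)^T\big)\,{\rm vec}(\bs I_1).
\]
Since $(\bs B^T)^T=\bs B$ and ${\rm vec}(\bs I_1)=(1)$ is the $1\times 1$ unit vector, the right-hand side collapses to $\bs A\otimes\bs B$, which is the desired identity; a dimension check confirms consistency, as $\bs A\otimes\bs B$ has size $mn\times 1$. As an independent verification I would also read off both sides directly from the definitions: under the row-major vec convention, the $i$-th row $a_{i1}\bs B^T$ of $\bs A\bs B^T$ becomes the block $a_{i1}\bs B$ when written as a column, so ${\rm vec}(\bs A\bs B^T)$ is the stacked column of blocks $a_{11}\bs B,\dots,a_{m1}\bs B$, which is precisely $\bs A\otimes\bs B$ as given by the Kronecker-product definition.

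I do not expect a genuine obstacle here, as the statement is elementary; the only point deserving care is consistency with the row-stacking (row-major) vec convention adopted in this paper, which differs from the more common column-stacking one. Once that convention is fixed, both the reduction to Lemma \ref{lemmasp1} and the direct entrywise check go through immediately, and no nontrivial estimate or construction is required.
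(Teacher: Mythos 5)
Your argument is correct, and in fact the paper offers no proof of this lemma at all: it is stated as a known fact with a citation to \cite{Steeb2006Book}, so there is nothing internal to compare against. Your reduction to Lemma \ref{lemmasp1} via the factorization $\bs A\bs B^T=\bs A\,\bs I_1\,\bs B^T$ is a clean way to make the statement a corollary of machinery the paper already has: with the substitutions $\bs B\mapsto\bs I_1$ and $\bs C\mapsto\bs B^T$, Lemma \ref{lemmasp1} gives ${\rm vec}(\bs A\,\bs I_1\,\bs B^T)=(\bs A\otimes\bs B)\,{\rm vec}(\bs I_1)=\bs A\otimes\bs B$, and the dimension count ($mn\times 1$ on both sides) checks out. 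Your independent entrywise verification is also right, and you correctly flag the one point that actually matters here: the paper's ${\rm vec}$ is the row-stacking convention, which is exactly why Lemma \ref{lemmasp1} reads ${\rm vec}(\bs A\bs B\bs C)=(\bs A\otimes\bs C^T){\rm vec}(\bs B)$ rather than the more common $(\bs C^T\otimes\bs A){\rm vec}(\bs B)$; under that convention the $i$-th row $a_{i1}\bs B^T$ of $\bs A\bs B^T$ contributes the block $a_{i1}\bs B$, and stacking these gives $\bs A\otimes\bs B$. No gaps.
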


	We consider the following first-order initial value problem is given by
	\begin{equation}\label{IRKM2-1}
	\begin{cases}
	y'=f(t,y(t)),\;\; t_0< t\leq T,
	\\y(t_0)=y_0.
	\end{cases}
	\end{equation}
	A $s$-stage formula of implicit Runge-Kutta method for approximating  \eqref{IRKM2-1} can be
	written as
	\begin{equation}\label{IRKM2-2}
	\begin{cases}
	y_{n + 1} = {y_n} + h\sum\limits_{l = 1}^s {{b_l}} f\left( {{t_n} + {c_l}h,{k_l}} \right),\\
	{k_l} = {y_n} + h\sum\limits_{j = 1}^s {{a_{lj}}} f\left( {{t_n} + {c_j}h,{k_j}} \right),\quad l= 1,2, \ldots ,s,
	\end{cases}
	\end{equation}
	where $h=(T-t_0)/(N_t-1)$ is the integration step, $\{k_l\}_{l=1}^s$ are the internal stages and $t_n = t_0+hn$.
	If $a_{lj}=0\, (l>j)$ the method is called diagonally implicit Runge-Kutta (DIRK) method.
	Then the  \eqref{IRKM2-2} can be written the following form
	\begin{equation}
	\label{IRKM2-3}
	\begin{cases}
	\bs K =  {\bs 1}_s y_n+h \bs A \bs F( \bs K),\\
	y_{n + 1} = {y_n} + h \bs B^T \bs F(\bs K),
	\end{cases}
	\end{equation}
	where $A=(a_{lj})_{l,j=1,\ldots,s},$ $\bs B=[b_1,b_2,\cdots,b_s]^T$, $\bs K=[k_1,k_2,\cdots,k_s]^T$, ${\bs 1}_s = [\underbrace{1,1,\cdots,1}_s]^T$,
	and
	\begin{equation*}
	\bs F(\bs K)=[f( t_n + c_1h,k_1),f( t_n + c_2h,k_2),\cdots,f( t_n + c_sh,k_s)]^T.
	\end{equation*}
	
	If we consider the system of linear ordinary differential equations
	\begin{align}
	\label{IRKM2-4}
	\begin{cases}
	-\ri \bs M_1\bs y'+\bs M_2\bs y-\bs  f(t)=0, \ \ t_0< t \leq T,
	\\ \bs y(t_0)=\bs y_0,
	\end{cases}
	\end{align}
	where $\bs M_1,\bs M_2 \in \mathbb{R}_{m\times m}$, and
	\begin{align*}\bs y(t)=[y_1(t),y_2(t),\cdots,y_m(t)]^T,
	\;\; \bs f(t)=[f_1(t),f_2(t),\cdots,f_m(t)]^T.
	\end{align*}
	Using s-stage formula of Runge-Kutta method \eqref{IRKM2-3} for approximating  \eqref{IRKM2-4} can be written as
	\begin{equation}
	\label{IRKM2-5}
	\begin{cases}
	-\ri  \bs K \bs M_1^T = -\ri  {\bs 1}_s \bs y_n^T \bs M_1^T+h \bs A(-\bs K \bs M_2^T+\bs F),\\
	-\ri \bs y_{n + 1}^T\bs M_1^T = -\ri {\bs y_n^T}\bs M_1^T + h\bs B^T (-\bs K\bs M_2^T+\bs F),
	\end{cases}
	\end{equation}
	where
	\begin{equation*}\begin{split}
	&\bs K=[\bs k_1^T;\bs k_2^T;\cdots;\bs k_s^T],\;\;
	\bs F=[\bs f( t_n + c_1h)^T; \bs f( t_n + c_2h)^T;\cdots;\bs f( t_n + c_sh)^T].
	\end{split}\end{equation*}
	By using Lemma \ref{lemmasp1} and \ref{lemmasp2}, we have
	\begin{equation}
	\label{IRKM2-8}
	(-\ri \bs I_{s}\otimes \bs M_1 + h\bs  A\otimes\bs  M_2) {\rm vec}(\bs K) =-\ri {\bs 1}_s\otimes  (\bs M_1{\bs y}_n)+ h  \bs A\otimes\bs  I_m {\rm vec}(\bs F),
	\end{equation}
	where $\bs I_s$ is the $s\times s$  unitary matrix.
	Thus we can obtain ${\rm vec}(\bs K)$ from \eqref{IRKM2-8} with
	GMRES iteration (see, e.g., \cite[PP. 55--58]{Shen2006Book}).
	We write the Runge-Kutta scheme in a tabular format known as the Butchers table,
	in Table \ref{ThreeIRK}, we choose a 3-stage IRK method (cf. \cite{Butcher}).
	\begin{table}[!htp]
		\centering
		\caption{Left: Butchers table. Right: 3-stage IRK method} 	  	
		\begin{tabular}{c|cccc}
			$c_1$ & $a_{11}$ & $\cdots$ & $a_{1s}$\\
			$\vdots$ & $\vdots$ & $\ddots$ & $\vdots$ \\
			$c_s$ & $a_{11}$ & $\cdots$ & $a_{ss}$\\
			\hline
			$\,$ & $b_{1}$ & $\cdots$ & $b_{s}$\\
		\end{tabular}\qquad
		\begin{tabular}{c|cccc}
			$\frac{1}{2}-\frac{\sqrt{15}}{10}$ & $\frac{5}{36}$ & $\frac{2}{9}-\frac{\sqrt{15}}{15}$ & $\frac{5}{36}-\frac{\sqrt{15}}{30}$\\
			$\frac{1}{2}$ & $\frac{5}{36}+\frac{\sqrt{15}}{24}$ & $\frac{2}{9}$ & $\frac{5}{36}-\frac{\sqrt{15}}{24}$ \\
			$\frac{1}{2}+\frac{\sqrt{15}}{10}$ & $\frac{5}{36}+\frac{\sqrt{15}}{30}$ & $\frac{2}{9}+\frac{\sqrt{15}}{15}$ & $\frac{5}{36}$\\
			\hline
			$\,$ & $\frac{5}{18}$ & $\frac{4}{9}$ & $\frac{5}{18}$\\
		\end{tabular}
		\label{ThreeIRK}
	\end{table}
	
	\section{Discretize two-dimensional Schr\"{o}dinger equation in space by Galerkin-Legendre spectral method}
	
	In this section,
	we will present the Galerkin-Legendre spectral method to solve
	the unsteady two-dimensional Schr\"{o}dinger equation for the space with the nonhomogeneous Dirichlet
	boundary conditions and initial condition.
	Firstly, we make the variable transformations
	\begin{align*}
	&x=c+\frac{\tilde{x}+1}{2}(d-c),\ y=c+\frac{\tilde{y}+1}{2}(d-c),\ \gamma =\Big (\frac{2}{d-c}\Big)^2,\\
	&\tilde{u}(\tilde{x},\tilde{y},t)=u\big(c+\frac{\tilde{x}+1}{2}(d-c),c+\frac{\tilde{y}+1}{2}(d-c),t\big),\\
	&\tilde{\psi}(\tilde{x},\tilde{y}) = \psi\big(c+\frac{\tilde{x}+1}{2}(d-c),c+\frac{\tilde{y}+1}{2}(d-c)\big),\\
	&\tilde{\varphi}(\tilde{x},\tilde{y}) = \varphi\big(c+\frac{\tilde{x}+1}{2}(d-c),c+\frac{\tilde{y}+1}{2}(d-c)\big),\\
	&\tilde{g}_1(\tilde{y},t) =g_1\big(c+\frac{\tilde{y}+1}{2}(d-c),t\big),\
	\tilde{g}_2(\tilde{y},t) =g_2\big(c+\frac{\tilde{y}+1}{2}(d-c),t\big),\\
	&\tilde{g}_3(\tilde{x},t) =g_3\big(c+\frac{\tilde{x}+1}{2}(d-c),t\big),\
	\tilde{g}_4(\tilde{x},t) =g_4\big(c+\frac{\tilde{x}+1}{2}(d-c),t\big).
	\end{align*}
	Then $\Omega$ is changed to the square $\widetilde{\Omega} = (-1,1)\times (-1,1)$,
	and the \eqref{Introduction-1} can be rewritten as
	\begin{subequations}\label{1-2}
		\begin{align}
		-\ri \frac{\partial \tilde{u}}{\partial t}=\gamma\Delta \tilde{u}
		+\tilde{\psi}(\tilde{x},\tilde{y})\tilde{u} ,\;\;(\tilde{x},\tilde{y},t)\in \widetilde{\Omega}\times (t_0,T),
		\end{align}
		with the initial condition
		\begin{equation}
		\tilde{u}(\tilde{x},\tilde{y},t_0)=\tilde{\varphi}(\tilde{x},\tilde{y}),\ {\rm on}
		\;\;	\widetilde{\Omega},
		\end{equation}
		and the Dirichlet boundary conditions
		\begin{equation}\begin{split}
		&\tilde{u}(-1,y,t)=\tilde{g}_1(y,t),\ \tilde{u}(1,\tilde{y},t)=\tilde{g}_2(\tilde{y},t),\ {\rm on}\ J=(t_0,T),\\
		&\tilde{u}(\tilde{x},-1,t)=\tilde{g}_3(\tilde{x},t),\ \tilde{u}(\tilde{x},1,t)=\tilde{g}_4(\tilde{x},t),\ {\rm on}\ J=(t_0,T).
		\end{split}\end{equation}
	\end{subequations}

	Now we recall the boundary conditions homogeneous process (see, e.g., \cite{shj1}).
	Setting
	\begin{align*}
	&\tilde{u}_1(\tilde{x},\tilde{y},t)= \frac{\tilde{g}_4(\tilde{x},t)-\tilde{g}_3(\tilde{x},t)}{2}\tilde{y}+\frac{\tilde{g}_4(\tilde{x},t)+\tilde{g}_3(\tilde{x},t)}{2},\\
	&\hat{g}_1(\tilde{y},t)=\tilde{g}_1(\tilde{y},t)-\tilde{u}_1(-1,\tilde{y},t),\ \hat{g}_2(\tilde{y},t)=\tilde{g}_2(\tilde{y},t)-\tilde{u}_1(1,\tilde{y},t),\\
	&\tilde{u}_2(\tilde{x},\tilde{y},t)=  \frac{\hat{g}_2(\tilde{y},t)-\hat{g}_1(\tilde{y},t)}{2}\tilde{x}+\frac{\hat{g}_2(\tilde{y},t)+\hat{g}_1(\tilde{y},t)}{2},\\
	&\hat{u} = \tilde{u}- \tilde{u}_1-\tilde{u}_2,\\
	&\tilde{f}(\tilde{x},\tilde{y},t)= \ri \Big(\frac{\partial \tilde{u}_1}{\partial t}+\frac{\partial \tilde{u}_2}{\partial t}\Big)+\gamma
	(\Delta \tilde{u}_1+\Delta \tilde{u}_2)+\tilde{\psi}(\tilde{u}_1+ \tilde{u}_2),\\
	&\hat{\varphi}(\tilde{x},\tilde{y}) = \tilde{\varphi}(\tilde{x},\tilde{y})-\tilde{u}_1(\tilde{x},\tilde{y},t_0)-\tilde{u}_2(\tilde{x},\tilde{y},t_0),
	\end{align*}
	then the  \eqref{1-2} can be rewritten as
	\begin{subequations}\label{1-3}
		\begin{align}\label{1-3p}
		-\ri \frac{\partial \hat{u}}{\partial t}=\gamma\Delta \hat{u}+\tilde{\psi}(\tilde{x},\tilde{y})\hat{u}
		+\tilde{f}(\tilde{x},\tilde{y},t),\ (\tilde{x},\tilde{y},t)\in (-1,1)\times (-1,1)\times (t_0,T),
		\end{align}
		with the initial condition and homogeneous boundary value conditions
		\begin{equation}\begin{split}
		\hat{u}(\tilde{x},\tilde{y},t_0)=&\hat{\varphi}(\tilde{x},\tilde{y}),\, {\rm on}\;
		\widetilde{\Omega},\\
		\hat{u}(\tilde{x},-1,t)=\hat{u}(\tilde{x},1,t)=&\hat{u}(-1,\tilde{y},t)=\hat{u}(1,\tilde{y},t)=0,
		\, {\rm on}\; J=(t_0,T).
		\end{split}\end{equation}
	\end{subequations}
	We shall now discretize the equation \eqref{1-3}
	by using the Galerkin-Legendre spectral method in space. 
	Let us denote $L_n(\tilde{x})$ the $n$th degree Legendre polynomial (see, e.g., \cite[PP. 18 and 19]{Shen2006Book}) and
	\begin{equation*}
	P_N=\textup{span}\{L_0(\tilde{x}),L_1(\tilde{x}),\cdots,L_N(\tilde{x})\},\ V_N=\{v\in P_N:v(\pm
	1)=0\}.
	\end{equation*}
	Then the semi-discrete Legendre-Galerkin method for \eqref{1-3} is:
	find $\hat{u}_N:\bar{J}\rightarrow V_N^2 $ such that
	\begin{align}
	\label{GLP-1} \left\{\begin{array}{ll} -\ri (\partial
	{_t}\hat{u}_N, v)+\gamma( \nabla
	\hat{u}_N,\nabla v)-(\tilde{\psi}(\tilde{x},\tilde{y})\hat{u}_N,v)-(\hat{f}(\tilde{x},\tilde{y},t),v)=0,\quad \forall\, v \in V_N^2,
	\\
	(\hat{u}_N(\tilde{x},\tilde{y},t_0)-\hat{u}_0,v)=0,\quad\forall \,v \in V_N^2,
	\end{array}\right.
	\end{align}
	where $(u,v)=\int_{\widetilde{\Omega}}u\bar{v}d\tilde{x}d\tilde{y}$ is the scalar
	product in $ L^2(\widetilde{\Omega})$, the $\bar{v}$ is complex conjugate of $v$.
	
	The following lemma is the key to implement our algorithms.
	\begin{lemma}\textup{(\cite[Lemma 2.1]{shj1})} 
		\label{lemma1}  Let us denote
		\begin{displaymath}
		\phi_k(\tilde{x})=c_k(L_k(\tilde{x})-L_{k+2}(\tilde{x})),\;\;c_k=\frac{1}{\sqrt{4k+6}},
		\end{displaymath}
		\begin{displaymath}
		\tilde{a}_{jk}=\int^1_{-1}\phi'_k(\tilde{x})\phi'_j(\tilde{x})d\tilde{x},\;\; \tilde{b}_{jk}=\int^1_{-1}\phi_k(\tilde{x})\phi_j(\tilde{x})d\tilde{x} ,
		\end{displaymath}
		then
		\begin{equation*}
		\tilde{a}_{jk}=\left\{\begin{array}{ll} 1, & k=j,
		\\
		0, &k\neq j,
		\end{array} \right.
		\quad\tilde{ b}_{jk}=\tilde{b}_{kj}=\left\{\begin{array}{ll}
		c_kc_j(\frac{2}{2j+1}+\frac{2}{2j+5}), & k=j,
		\\
		-c_kc_j\frac{2}{2k+1}, & k=j+2,
		\\
		0, & otherwise.
		\end{array} \right.
		\end{equation*}
	\end{lemma}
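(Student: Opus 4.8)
The plan is to prove Lemma~\ref{lemma1} by direct computation, exploiting the orthogonality relations of the Legendre polynomials. The two key facts I would invoke are the standard $L^2$-orthogonality
\[
\int_{-1}^{1} L_m(\tilde{x}) L_n(\tilde{x})\,d\tilde{x} = \frac{2}{2n+1}\,\delta_{mn},
\]
and the derivative relation expressing $L_n'$ through lower-degree Legendre polynomials, or equivalently the fact that the functions $\phi_k$ form an orthonormal system in the $H_0^1$ inner product. The whole lemma reduces to computing the two bilinear forms $\tilde{a}_{jk}$ and $\tilde{b}_{jk}$ against the basis $\phi_k = c_k(L_k - L_{k+2})$.

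\textbf{Stiffness matrix $\tilde{a}_{jk}$.} First I would handle the entries $\tilde{a}_{jk} = \int_{-1}^1 \phi_k' \phi_j'\,d\tilde{x}$. The cleanest route is to integrate by parts, writing $\tilde{a}_{jk} = -\int_{-1}^1 \phi_k'' \phi_j\,d\tilde{x}$ since the boundary terms vanish as $\phi_j(\pm 1)=0$. One then uses the Legendre differential equation, namely that $L_n$ satisfies $\big((1-\tilde{x}^2)L_n'\big)' + n(n+1)L_n = 0$, together with the key identity
\[
L_k(\tilde{x}) - L_{k+2}(\tilde{x}) = -\frac{2k+3}{(k+1)(k+2)}\,(1-\tilde{x}^2)\,L_{k+1}'(\tilde{x})
\]
(up to the exact normalizing constant, which I would pin down from the recurrence for Legendre derivatives). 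This rewrites $\phi_k$ as a multiple of $(1-\tilde{x}^2)L_{k+1}'$, after which $\phi_k'$ becomes a multiple of $L_{k+1}$ and the orthogonality of the Legendre polynomials collapses $\tilde{a}_{jk}$ to a diagonal matrix. The constant $c_k = 1/\sqrt{4k+6}$ is precisely chosen so that $\tilde{a}_{kk}=1$, so the final step is to verify that this normalization produces exactly $1$ on the diagonal.

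\textbf{Mass matrix $\tilde{b}_{jk}$.} Next I would compute $\tilde{b}_{jk} = \int_{-1}^1 \phi_k \phi_j\,d\tilde{x}$ by expanding the product $(L_k - L_{k+2})(L_j - L_{j+2})$ into four terms and applying the orthogonality relation above. Only those terms whose Legendre indices coincide survive, which forces $k=j$, $k=j+2$, or $k=j-2$. For $k=j$ both the $L_k L_k$ and $L_{k+2}L_{k+2}$ contributions survive, giving $c_k c_j\big(\frac{2}{2j+1}+\frac{2}{2j+5}\big)$; for $k=j+2$ only the cross term $-L_{k}L_{k}$ (matching $L_{j+2}=L_k$) survives, giving $-c_k c_j\frac{2}{2k+1}$; all other entries vanish. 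Symmetry $\tilde{b}_{jk}=\tilde{b}_{kj}$ is immediate from the symmetry of the inner product.

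\textbf{Main obstacle.} The routine expansions are straightforward bookkeeping; the one genuinely delicate step is establishing the representation of $\phi_k$ in terms of $(1-\tilde{x}^2)L_{k+1}'$ with the correct constant, since this is what makes the stiffness matrix diagonal and fixes the value of $c_k$. I would derive it from the contiguous relations for Legendre polynomials (specifically $(2n+1)L_n = L_{n+1}' - L_{n-1}'$ and the Sturm--Liouville form), being careful to track the normalizing factor so that the diagonal entry comes out to exactly $1$. Once that identity is in hand, both matrices follow by orthogonality alone.
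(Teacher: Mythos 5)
The paper gives no proof of this lemma; it simply cites \cite[Lemma 2.1]{shj1}, and your argument is essentially the standard one found there, so the proposal is sound. The mass-matrix computation is exactly right: expand the four products and apply $\int_{-1}^1 L_mL_n = \tfrac{2}{2n+1}\delta_{mn}$. For the stiffness matrix your route (integrate by parts, rewrite $\phi_k$ via the Sturm--Liouville form) works but is more roundabout than necessary: the contiguous relation $L_{n+1}'-L_{n-1}'=(2n+1)L_n$ applied directly gives $\phi_k'=c_k(L_k'-L_{k+2}')=-c_k(2k+3)L_{k+1}$, so $\tilde a_{jk}=c_kc_j(2k+3)(2j+3)\int_{-1}^1 L_{k+1}L_{j+1}=\tfrac{2(2k+3)}{4k+6}\delta_{jk}=\delta_{jk}$ with no integration by parts at all. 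One small slip: the identity you quote should read $L_k-L_{k+2}=+\tfrac{2k+3}{(k+1)(k+2)}(1-\tilde x^2)L_{k+1}'$ (it follows from $(1-\tilde x^2)L_n'=\tfrac{n(n+1)}{2n+1}(L_{n-1}-L_{n+1})$ with $n=k+1$); the sign error is harmless here since $\tilde a_{jk}$ is bilinear in the two representations and the diagonal entry is a square, but you should fix it if you write the proof out.
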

	Obviously
	$$
	V_N=\textup{span}\{\phi_0(\tilde{x}),\phi_1(\tilde{x}),\cdots,\phi_{N-2}(\tilde{x})\}.
	$$
	Let us setting
	\begin{align}
	\label{GLP-2}\hat{u}_N(\tilde{x},\tilde{y},t)=\sum_{k,j=0}^{N-2}
	\alpha_{kj}(t)\phi_k(\tilde{x})\phi_j(\tilde{y}),
	\end{align}
	then inserting \eqref{GLP-2} into \eqref{GLP-1} and taking
	\begin{equation*}
	v = \phi_l(\tilde{x})\phi_m(\tilde{y}),
	\end{equation*}
	yields
	\begin{equation}\label{GLP-3}
	\begin{aligned}
	-\ri \big(\partial {_t}\hat{u}_N,\phi_l(\tilde{x})\phi_m(\tilde{y})\big)&+\gamma\big(\nabla \hat{u}_N,
	\nabla (\phi_l(\tilde{x})\phi_m(\tilde{y}))\big)
	-\big(\tilde{\psi}(\tilde{x},\tilde{y})\hat{u}_N,\phi_l(\tilde{x})\phi_m(\tilde{y})\big) \\
	&-\big(\hat{f}(\tilde{x},\tilde{y},t),\phi_l(\tilde{x})\phi_m(\tilde{y})\big)=0,\;\;l,m=0,1,\ldots,N-2.
	\end{aligned}
	\end{equation}
	Denote ${\bs \alpha}=(\alpha_{kj})_{k,j=0,1,\ldots,N-2},$ and
	\begin{equation*}\begin{split}
	\widetilde{\bs B}=(\tilde{b}_{kj})_{k,j=0,1,\ldots,N-2},\;\;\hat{f}_{kj}= \big(\hat{f}(\tilde{x},\tilde{y},t),\phi_k(\tilde{x})\phi_j(\tilde{y})\big),\;\;\widehat{\bs F}(t)=(\hat{f}_{kj})_{k,j=0,1,\ldots,N-2}.
	\end{split}\end{equation*}
	The \eqref{GLP-3} is equivalent to the following
	matrix equation
	\begin{equation*}
	-\ri\widetilde{\bs B}\bs \alpha'\widetilde{\bs B}+\gamma(\widetilde{\bs B}\bs \alpha \bs I_{N-1}+ \bs I_{N-1}
	\bs \alpha \widetilde{\bs B})-\bs W_{\bs \alpha}-\widehat{\bs F}(t)=0,
	\end{equation*}
	where ${\bs \alpha}=(\alpha_{kj})_{k,j=0,1,\ldots,N-2}$ and
	\begin{equation*}
	(\bs W_{\bs \alpha})_{lm} =  \sum_{k,j=0}^{N-2}
	\alpha_{kj}(t)\big(\tilde{\psi}(\tilde{x},\tilde{y})\phi_k(\tilde{x})\phi_j(\tilde{y}),\phi_l(\tilde{x})\phi_m(\tilde{y})\big),
	\  l,m=0,1,\ldots,N-2.
	\end{equation*}
	Using Lemma \ref{lemmasp1}, we have
	\begin{equation*}
	-\ri \widetilde{\bs B}\otimes \widetilde{\bs B}\, {\rm vec}(\bs \alpha')+(\gamma\widetilde{\bs B}
	\otimes  I_{N-1}+   \gamma \bs I_{N-1} \otimes
	\widetilde{\bs B}-\widetilde{\bs W}){\rm vec}(\bs \alpha)-{\rm vec} (\widehat{\bs F}(t))=0,
	\end{equation*}
	where
	$ \widetilde{\bs W}_{(l-1)(N-2) + m,(k-1)(N-2) + j} =
	\big(\tilde{ \psi}(\tilde{x},\tilde{y})\phi_k(\tilde{x})\phi_j(\tilde{y}),\phi_l(\tilde{x})\phi_m(\tilde{y})\big),\
	l,m,k,j=0,1,\ldots,N-2.
	$
	
	Setting
	\begin{equation*}
	\hat{u}_N(\tilde{x},\tilde{y},t_0)=\sum_{k,j=0}^{N-2}\alpha_{kj}(t_0)\phi_k(\tilde{x})\phi_j(\tilde{y}),
	\end{equation*}
	and with
	\begin{equation*}
	\big(\hat{u}_N(\tilde{x},\tilde{y},t_0)-\hat{\varphi}(\tilde{x},\tilde{y}),\phi_l(\tilde{x})\phi_m(\tilde{y})\big)=0,\;\;l,m=0,1,\ldots,N-2,
	\end{equation*}
	we obtian
	\begin{equation*}
	\widetilde{\bs B}\bs\alpha(t_0)\widetilde{\bs B}=\hat{\bs u}_0,\;\; (\hat{\bs u}_0)_{kj}=\big(\varphi(\tilde{x},\tilde{y}),\phi_k(\tilde{x})\phi_j(\tilde{y})\big).
	\end{equation*}
	Using Lemma \ref{lemmasp1}, we have
	\begin{equation*}
	\widetilde{\bs B}\otimes \widetilde{\bs B}\, {\rm vec}(\bs \alpha(t_0))= {\rm vec}(\hat{\bs u}_0).
	\end{equation*}
	Therefore, \eqref{GLP-1} leads to the following system of linear ordinary
	differential equations
	\begin{align}
	\label{GLP-4}
	\begin{cases}
	-\ri\widetilde{\bs B}\otimes \widetilde{\bs B} {\rm vec}(\bs \alpha')+(\gamma\widetilde{\bs B}\otimes  \bs I_{N-1}+ \gamma\bs  I_{N-1}\otimes\widetilde{\bs B}-\widetilde{\bs W}){\rm vec}(\bs \alpha)\\
	\hspace{5cm}-{\rm vec} (\widehat{\bs F}(t))=0,\quad  t_0<t\leq T, \\
	{\rm vec}(\bs \alpha(t_0))=\big(\widetilde{\bs B}\otimes \widetilde{\bs B}\big)^{-1}{\rm vec}(\hat{\bs u}_0).
	\end{cases}
	\end{align}
	Hence we can use the 3-stage IRK method for \eqref{GLP-4}.
	\section{A priori error estimate}

	In this section, we derive optimal a priori error bound for the
	semidiscrete scheme of the problem \eqref{Introduction-1} by using
	the Galerkin-Legendre spectral method discretization for the space. Recall the spaces
	\begin{equation*} \begin{split}
	L^2 (\widetilde{\Omega}) =\{u:(u,u) &<+\infty\},
	\;\;
	H^m (\widetilde{\Omega}) =\{u : {D}^k u \in L^2 (\widetilde{\Omega}),\ 0\leq |k|\leq m \}, \\
	H^m_{0 }(\widetilde{\Omega}) =&\{u \in H^m(\widetilde{\Omega}):
	{D}^k u (\partial \widetilde{\Omega} )=0,\ 0\leq |k|\leq m-1 \}.
	\end{split} \end{equation*}
	The norms in $L^2 (\widetilde{\Omega})$
	and $H^m (\widetilde{\Omega})$  denoted by $\|\cdot\| $ and $\|\cdot\|_{m}$,
	respectively, which are given as
	\begin{equation*}\|u\| = (u, u) ^{1/2},\quad \|u\|_{m}=
	\Big (\sum_{|k|=0}^m \|{D}^k u\| ^2\Big)^{1/2}.
	\end{equation*}
	Furthermore, we shall use
	$|u|_{m}= \Big(\sum\limits_{|k|=m}\|{D}^ku\| ^2\Big)^{1/2}$
	to denote the semi-norm in $H^m (\widetilde{\Omega})$.
	We now introduce the bochner space $L^p(J;X)$ endowed with the norm
	\begin{align*}
	\label{bochner}
	\|v\|_{L^p;X} =
	\begin{cases}
	(\int_J\|v\|_X^pdt)^{1/p},\,\,&1\le p<\infty,\\
	\textup{ess}\sup_{t\in J}\|v\|_X,\,\,&p=\infty, \\
	\end{cases}
	\end{align*}
	where $X=L^2(\widetilde{\Omega})$ or $X=H^m(\widetilde{\Omega})$.
	
	Let $H^{1}_{0}(\widetilde{\Omega})=H^1(\widetilde{\Omega})\bigcap
	\{v|v(\partial\widetilde{\Omega})=0\}.$ The $L^2(\widetilde{\Omega})$ orthogonal
	projection $\Pi^0_N:H^{1}_{0}(\widetilde{\Omega})\rightarrow
	V^2_N$ is defined by
	\begin{equation*}
	(\Pi^0_N  {u}- {u},v)=0, \quad\forall v \in V_N^2.
	\end{equation*}
	The operators $\Pi^0_N$ have the following approximation
	properties.
	\begin{lemma}\textup{(\cite[PP. 309 and 310]{Canuto1987Book})}
		\label{lemmaE1} For any positive integer $m\ge 1$, the following
		estimate holds for any $ {u}\in H^m(\widetilde{\Omega})\bigcap
		H^1_{0}(\widetilde{\Omega})$
		\begin{equation*}
		N\| {u}-\Pi^0_N
		{u}\|+\|\nabla( {u}-\Pi^0_N {u})\|\le C
		N^{1-m}\| {u}\|_{m},
		\end{equation*}
		where $C$ is independent of $N$.
	\end{lemma}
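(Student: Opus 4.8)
The plan is to reduce the two-dimensional estimate to one-dimensional Legendre approximation theory and then to recover the two terms on the left-hand side complementarily: the gradient term by quasi-optimality of the projection and the $L^2$ term by a duality (Aubin--Nitsche) argument. First I would isolate the one-dimensional ingredient. On $(-1,1)$, let $\pi_N$ denote the corresponding one-dimensional orthogonal projection onto $V_N=\{v\in P_N:v(\pm1)=0\}$. Given $w\in H^m(-1,1)\cap H^1_0(-1,1)$, I would expand $w$ in Legendre polynomials, truncate at degree $N$, and add a low-degree polynomial correction that restores the homogeneous boundary values, producing an explicit competitor $w_N\in V_N$. The approximation error is then the tail of the Legendre series, and the decay of the Legendre coefficients is controlled by $\|w\|_m$; this gives $\|w-w_N\|\le CN^{-m}\|w\|_m$ and $\|(w-w_N)'\|\le CN^{1-m}\|w\|_m$. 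Since $\pi_N$ is at least as good as $w_N$ in the relevant norm, the same bounds hold with $\pi_N w$.

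Next I would lift to two dimensions using the tensor-product structure $V_N^2=V_N\otimes V_N$. Applying the one-dimensional projection successively in each variable gives a map of $H^m(\widetilde\Omega)\cap H^1_0(\widetilde\Omega)$ into $V_N^2$; estimating variable by variable, and using the $H^1$-stability of $\pi_N$ to control the mixed (cross) terms, yields $\|\nabla(u-\Pi^0_N u)\|\le CN^{1-m}\|u\|_m$ by quasi-optimality of $\Pi^0_N$ in the energy seminorm. This handles the second term of the lemma.

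For the first term I would run a duality argument. With $e=u-\Pi^0_N u\in H^1_0(\widetilde\Omega)$, introduce the auxiliary problem $-\Delta w=e$ on $\widetilde\Omega$ with $w|_{\partial\widetilde\Omega}=0$; since the square is convex, elliptic regularity gives $w\in H^2(\widetilde\Omega)$ with $\|w\|_2\le C\|e\|$. Integrating by parts and using that $e$ is orthogonal to $V_N^2$ in the energy inner product, $\|e\|^2=(\nabla e,\nabla w)=(\nabla e,\nabla(w-\Pi^0_N w))\le\|\nabla e\|\,\|\nabla(w-\Pi^0_N w)\|$. Applying the gradient estimate of the previous step with $m=2$ to $w$ bounds this by $CN^{-1}\|\nabla e\|\,\|w\|_2\le CN^{-1}\|\nabla e\|\,\|e\|$, so $\|e\|\le CN^{-1}\|\nabla e\|$, i.e.\ $N\|e\|\le C\|\nabla e\|\le CN^{1-m}\|u\|_m$. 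Adding the two contributions gives the stated bound.

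I expect the principal obstacle to be the sharp one-dimensional estimate: converting Sobolev regularity $\|w\|_m$ into the correct decay rate of the Legendre coefficients, and bounding the derivative of the truncation-plus-correction without losing a power of $N$ (a crude inverse inequality would only give $N^{2-m}$, which is one power too weak). A second delicate point is the anisotropic tensor-product argument, where one must verify that projecting in one variable does not spoil the regularity needed for the projection in the other variable and that the homogeneous boundary conditions are preserved at each stage.
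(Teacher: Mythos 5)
The paper offers no proof of this lemma at all --- it is imported verbatim from Canuto et al.\ (pp.~309--310) --- so there is no in-paper argument to compare against. Your reconstruction is the standard one from the spectral-methods literature: a one-dimensional Legendre truncation with a low-degree boundary correction, a tensor-product lifting to $\widetilde{\Omega}$, quasi-optimality in the energy seminorm for the gradient term, and an Aubin--Nitsche duality argument on the convex square for the $L^2$ term. Each of those steps is sound \emph{for the projection you are implicitly using}, and your closing remarks correctly identify where the sharpness is at stake (the decay rate of the Legendre coefficients and the avoidance of a crude inverse inequality).

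The one genuine mismatch is which projection that is. Both of your key steps --- the quasi-optimality $\|\nabla(u-\Pi^0_Nu)\|\le\|\nabla(u-\chi)\|$ for all $\chi\in V_N^2$, and the Galerkin orthogonality $(\nabla e,\nabla v)=0$ for all $v\in V_N^2$ on which the duality argument hinges --- require $\Pi^0_N$ to be the $H^1_0$-orthogonal (elliptic) projection. The paper, however, defines $\Pi^0_N$ by $(\Pi^0_Nu-u,v)=0$ for all $v\in V_N^2$, i.e.\ as the $L^2$-orthogonal projection, for which neither property is available; for the plain $L^2$ projection onto polynomial spaces the simultaneous $H^1$ estimate does not follow from best approximation and is known to be suboptimal in the unconstrained case. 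What your argument actually establishes is the estimate for the elliptic projection $\mathbb{P}^0_N$ of \eqref{EE2}, i.e.\ essentially Lemma~\ref{lemmaE2}, which is also the form in which the result appears in the cited reference. To defend the statement as literally written you would either need a separate argument (an $H^1$-stability result for the $L^2$ projection onto $V_N^2$) or, more plausibly, you should flag that the paper's definition of $\Pi^0_N$ is inconsistent with the estimate being attributed to it.
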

	
	We now split the error as a sum
	of two terms
	\begin{align}
	\label{EE1} \hat{u}_N-\hat{u} =(\hat{u}_N-\mathbb{P}^0_N \hat{u})+(\mathbb{P}^0_N \hat{u}-\hat{u})= \theta + \rho,
	\end{align}
	where $\mathbb{P}^0_N\hat u$ is an elliptic projection in $V_N^2$ of the exact
	solution $\hat{u}$, which  defined by
	\begin{align}
	\label{EE2} \mathcal{A}(\mathbb{P}^0_N \hat{u}-\hat{u}, v)=0,\quad \forall v \in V_N^2.
	\end{align}
	We begin with the following auxiliary result.
	\begin{lemma}\textup{(\cite[P. 309]{Canuto1987Book}}
		\label{lemmaE2} Assume that
		$\hat{u}\in  L^\infty(J, H^m(\widetilde{\Omega}) \cap H^1_{0}(\widetilde{\Omega}))$
		and $\mathbb{P}^0_N$ defined by \eqref{EE2}.
		Then
		\begin{equation*}
		N  \|\mathbb{P}^0_N\hat{u}-\hat{u}\|+   \|\nabla(\mathbb{P}^0_N\hat{u}-\hat{u})\|\leq CN^{1-m}\|\hat{u}\|_{m},\ \textup{for} \ \  t \in \bar{J},
		\end{equation*}
		where $C$ is independent of $N$.
	\end{lemma}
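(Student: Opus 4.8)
The plan is to reduce the estimate for the elliptic projection $\mathbb{P}^0_N$ to the already-established estimate for the projection $\Pi^0_N$ in Lemma \ref{lemmaE1}, via a quasi-optimality (C\'ea) argument for the gradient part and an Aubin--Nitsche duality argument for the $L^2$ part. First I would record the two structural properties of the bilinear form $\mathcal{A}(\cdot,\cdot)$ associated with the spatial operator $\gamma\Delta+\tilde{\psi}$ on $H^1_0(\widetilde{\Omega})$: continuity, $|\mathcal{A}(u,v)|\le C_1\|u\|_1\|v\|_1$, and a G\r{a}rding-type lower bound $\textup{Re}\,\mathcal{A}(v,v)\ge \gamma|v|_1^2-C_2\|v\|^2$. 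After a harmless shift of the form (or, equivalently, restricting to $N$ large enough to absorb the lower-order term) this yields an effective coercivity on $V_N^2$, which guarantees that $\mathbb{P}^0_N\hat{u}$ is well-defined. The Galerkin orthogonality \eqref{EE2}, namely $\mathcal{A}(\mathbb{P}^0_N\hat{u}-\hat{u},v)=0$ for all $v\in V_N^2$, is the engine of both bounds.

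For the gradient estimate, I would choose $v=\mathbb{P}^0_N\hat{u}-\Pi^0_N\hat{u}\in V_N^2$ and use orthogonality to write $\mathcal{A}(\mathbb{P}^0_N\hat{u}-\hat{u},\mathbb{P}^0_N\hat{u}-\hat{u})=\mathcal{A}(\mathbb{P}^0_N\hat{u}-\hat{u},\Pi^0_N\hat{u}-\hat{u})$. Combining the coercivity on the left with the continuity on the right gives $\|\nabla(\mathbb{P}^0_N\hat{u}-\hat{u})\|\le C\|\hat{u}-\Pi^0_N\hat{u}\|_1$, and invoking Lemma \ref{lemmaE1} bounds the right-hand side by $CN^{1-m}\|\hat{u}\|_m$, which is exactly the gradient part of the claim.

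For the $L^2$ estimate I would run the Aubin--Nitsche trick: let $w$ solve the adjoint problem $\mathcal{A}(v,w)=(\mathbb{P}^0_N\hat{u}-\hat{u},v)$ for all $v\in H^1_0(\widetilde{\Omega})$, take $v=\mathbb{P}^0_N\hat{u}-\hat{u}$, and use orthogonality to replace $w$ by $w-\Pi^0_N w$, so that $\|\mathbb{P}^0_N\hat{u}-\hat{u}\|^2=\mathcal{A}(\mathbb{P}^0_N\hat{u}-\hat{u},\,w-\Pi^0_N w)$. Bounding by continuity and then applying elliptic $H^2$-regularity $\|w\|_2\le C\|\mathbb{P}^0_N\hat{u}-\hat{u}\|$ on the square $\widetilde{\Omega}$ together with Lemma \ref{lemmaE1} at $m=2$ produces the extra factor $N^{-1}$, yielding $\|\mathbb{P}^0_N\hat{u}-\hat{u}\|\le CN^{-m}\|\hat{u}\|_m$, i.e. $N\|\mathbb{P}^0_N\hat{u}-\hat{u}\|\le CN^{1-m}\|\hat{u}\|_m$. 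Because $\tilde{\psi}$ is time-independent and $\hat{u}\in L^\infty(J;H^m(\widetilde{\Omega})\cap H^1_0(\widetilde{\Omega}))$, all constants are independent of $t$, so the bound holds uniformly for $t\in\bar{J}$; adding the two estimates completes the proof.

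The main obstacle is the indefiniteness introduced by the potential term $-(\tilde{\psi}\,u,v)$: since $\mathcal{A}$ is only G\r{a}rding-coercive rather than coercive, I must carefully justify both that $\mathbb{P}^0_N$ is well-defined and that the dual problem is uniquely solvable with full $H^2$-regularity. The cleanest route is to pass to a shifted coercive form and show that the shift contributes only lower-order terms that are themselves controlled by the very $L^2$ estimate being proved, a standard bootstrap; alternatively one argues for $N$ sufficiently large so that the indefinite term is absorbed into the discrete coercivity constant.
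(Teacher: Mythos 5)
The paper does not actually prove this lemma: it is quoted from Canuto et al.\ \cite{Canuto1987Book} without argument, so there is no internal proof to compare against. Your route --- C\'ea's lemma for the gradient part plus an Aubin--Nitsche duality argument for the $L^2$ part, with $\Pi^0_N\hat u$ as the comparison element so that Lemma \ref{lemmaE1} supplies the rates --- is the standard way such a result is established, and in outline it is the right one.

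However, you have misidentified the bilinear form $\mathcal{A}$, and this matters. The paper never defines $\mathcal{A}$ explicitly, but comparing the weak formulation \eqref{GLP-1} with the identity used at the start of the proof of Theorem \ref{lemmaE5} (where $\gamma\mathcal{A}(\hat u_N,v)$ appears \emph{alongside} a separate term $-(\tilde{\psi}\hat u_N,v)$) shows that $\mathcal{A}(u,v)=(\nabla u,\nabla v)$ is the pure Dirichlet form; the potential is \emph{not} part of it. Consequently $\mathcal{A}$ is Hermitian and coercive on $H^1_0(\widetilde{\Omega})$ by the Poincar\'e inequality, $\mathbb{P}^0_N$ in \eqref{EE2} is simply the $H^1_0$-orthogonal projection onto $V_N^2$, and your entire apparatus of G\r{a}rding inequalities, shifted forms, ``$N$ sufficiently large,'' and the bootstrap between the $H^1$ and $L^2$ estimates is unnecessary: the C\'ea step gives best approximation in the energy norm directly, and the dual problem is the Poisson problem on the convex domain $\widetilde{\Omega}=(-1,1)^2$, for which $H^2$ regularity is classical. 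Worse, under your reading the argument has a genuine hole you only gesture at: if $0$ were an eigenvalue of $-\gamma\Delta-\tilde{\psi}$ on $H^1_0(\widetilde{\Omega})$, neither the projection nor the adjoint problem would be uniquely solvable, and no choice of large $N$ repairs that; you would need an explicit hypothesis excluding this. With the paper's actual $\mathcal{A}$ none of these issues arise, and your two-step C\'ea/duality argument goes through cleanly and reproduces the cited estimate.
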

	
	\begin{lemma}
		\label{lemmaE3} With $\mathbb{P}^0_N$ defined by \eqref{EE2} and
		$\rho=\mathbb{P}^0_N\hat{u}-\hat{u}$. Assume that $\hat{u},\partial
		{_t}\hat{u}\in  L^\infty(J, H^m(\widetilde{\Omega}) \cap H^1_{0}(\widetilde{\Omega}))$. Then
		\begin{equation*}
		\|\rho(t)\|\le
		CN^{-m}\|\hat{u}\|_{m},\ \textup{for} \  t \in \bar{J},
		\end{equation*}
		\begin{equation*}
		\|\partial
		{_t}\rho(t)\|\le
		CN^{-m}\|\partial
		{_t}\hat{u}\|_{m},\ \textup{for} \ \ t \in \bar{J},
		\end{equation*}
		where $C$ is independent of $N$.
	\end{lemma}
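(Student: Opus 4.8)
The plan is to reduce both bounds to the single approximation estimate of Lemma~\ref{lemmaE2}. The first inequality requires no extra work: Lemma~\ref{lemmaE2} already carries the term $N\|\mathbb{P}^0_N\hat{u}-\hat{u}\|$ on its left-hand side, so discarding the nonnegative gradient term and dividing by $N$ yields $\|\rho(t)\|=\|\mathbb{P}^0_N\hat{u}-\hat{u}\|\le CN^{-m}\|\hat{u}\|_{m}$ for every $t\in\bar{J}$, with $C$ independent of $N$.

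For the second inequality the key structural observation is that the elliptic projection $\mathbb{P}^0_N$ commutes with the time derivative. The bilinear form $\mathcal{A}$ (built from the spatial operator, hence $t$-independent) and the test space $V_N^2$ are both independent of $t$, so the defining relation \eqref{EE2} reads $\mathcal{A}(\mathbb{P}^0_N\hat{u},v)=\mathcal{A}(\hat{u},v)$ for all $v\in V_N^2$. Differentiating in $t$ gives $\mathcal{A}(\partial_t(\mathbb{P}^0_N\hat{u}),v)=\mathcal{A}(\partial_t\hat{u},v)$ for all $v\in V_N^2$. Since $\partial_t(\mathbb{P}^0_N\hat{u})\in V_N^2$ and $\mathbb{P}^0_N(\partial_t\hat{u})\in V_N^2$ both satisfy this identity, nondegeneracy of $\mathcal{A}$ on $V_N^2$ forces $\partial_t(\mathbb{P}^0_N\hat{u})=\mathbb{P}^0_N(\partial_t\hat{u})$, whence $\partial_t\rho=\mathbb{P}^0_N(\partial_t\hat{u})-\partial_t\hat{u}$.

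Once this commutation is in hand, the second bound follows by applying Lemma~\ref{lemmaE2} verbatim with $\hat{u}$ replaced by $\partial_t\hat{u}$. The hypothesis $\partial_t\hat{u}\in L^\infty(J,H^m(\widetilde{\Omega})\cap H^1_{0}(\widetilde{\Omega}))$ guarantees that $\partial_t\hat{u}$ lies in the class for which the lemma is valid, uniformly in $t$, so $N\|\partial_t\rho\|\le CN^{1-m}\|\partial_t\hat{u}\|_{m}$ and therefore $\|\partial_t\rho(t)\|\le CN^{-m}\|\partial_t\hat{u}\|_{m}$ for all $t\in\bar{J}$.

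The main obstacle is the rigorous justification of the differentiation step: one must confirm that $t\mapsto\mathbb{P}^0_N\hat{u}(t)$ is differentiable with values in $V_N^2$ and that the $t$-derivative may be interchanged with $\mathcal{A}(\cdot,v)$. This is precisely where the time-independence of $\mathcal{A}$ and of $V_N^2$ is essential; because $V_N^2$ is finite-dimensional, $\mathbb{P}^0_N$ is a bounded linear operator into a fixed space, and the assumed $L^\infty(J;\cdot)$ regularity of $\partial_t\hat{u}$ allows one to pass to the limit in the difference quotient inside the continuous bilinear form, thereby validating the commutation identity and closing the argument.
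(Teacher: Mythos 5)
The paper states Lemma~\ref{lemmaE3} without any proof, so there is no in-text argument to compare against; your proposal supplies exactly the standard argument the authors implicitly rely on (the elliptic-projection device of Thom\'{e}e's book, which they cite). Both of your steps are sound: the first bound is Lemma~\ref{lemmaE2} with the gradient term discarded and a division by $N$, and the second rests on the commutation $\partial_t(\mathbb{P}^0_N\hat{u})=\mathbb{P}^0_N(\partial_t\hat{u})$, which holds for the reason you give --- the form $\mathcal{A}$ and the space $V_N^2$ are time-independent and $\mathcal{A}$ is coercive, hence nondegenerate, on $V_N^2\subset H^1_0(\widetilde{\Omega})$ --- after which Lemma~\ref{lemmaE2} applied to $\partial_t\hat{u}$ finishes the job. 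Your closing caveat about passing to the limit in the difference quotient is the only delicate point, and the assumed $L^\infty(J;H^m(\widetilde{\Omega})\cap H^1_0(\widetilde{\Omega}))$ regularity of $\partial_t\hat{u}$ is enough to justify it.
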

	
	We are now ready for the $L^2$-error estimate for the
	semidiscrete problem.
	\begin{theorem}
		\label{lemmaE5} Let $\hat{u}$ and $\hat{u}_N$ be the solutions of \eqref{1-3}
		and \eqref{GLP-1}, respectively. Assume that $\tilde{\psi}$ is a real function, $\tilde{\psi}\in L^{\infty}(\widetilde{\Omega})$ and $\hat{u},\ \partial
		{_t}\hat{u} \in  L^\infty(J, H^m(\widetilde{\Omega}) \cap H^1_{0}(\widetilde{\Omega}))$. Then, we have
		\begin{equation*}\begin{split}
		&\|\hat{u}_N(t)-\hat{u}(t)\| \leq CN^{-m}
		\big(\|\hat{u}\|_{L^\infty(J;H^m(\widetilde{\Omega}))}+\|\partial
		{_t}\hat{u}\|_{L^\infty(J;H^m(\widetilde{\Omega}))}\big),\ \textup{for}\ t \in J,\\
		\end{split}\end{equation*}
		where $C$ is independent of $N$.
	\end{theorem}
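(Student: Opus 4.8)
The plan is to follow the elliptic-projection (Ritz projection) technique of Thom\'ee, adapted to the complex-valued Schr\"odinger setting. Using the splitting \eqref{EE1}, write $\hat u_N-\hat u=\theta+\rho$ with $\theta=\hat u_N-\mathbb P^0_N\hat u\in V_N^2$ and $\rho=\mathbb P^0_N\hat u-\hat u$. Since $\rho$ is already controlled to order $N^{-m}$ by Lemma \ref{lemmaE3}, the whole task reduces to estimating $\theta$. First I would derive an evolution equation for $\theta$: both $\hat u$ and $\hat u_N$ satisfy the weak formulation in \eqref{GLP-1} for every $v\in V_N^2$, so subtracting the two identities gives the Galerkin orthogonality
\[
-\ri(\partial_t(\theta+\rho),v)+\gamma(\nabla(\theta+\rho),\nabla v)-(\tilde\psi(\theta+\rho),v)=0,\qquad\forall v\in V_N^2.
\]
The defining property \eqref{EE2} of the elliptic projection annihilates the principal elliptic contribution $\gamma(\nabla\rho,\nabla v)$, so this collapses to the clean equation
\[
-\ri(\partial_t\theta,v)+\gamma(\nabla\theta,\nabla v)-(\tilde\psi\theta,v)=\ri(\partial_t\rho,v)+(\tilde\psi\rho,v),\qquad\forall v\in V_N^2.
\]

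Next I would run the energy argument. Choosing the admissible test function $v=\theta$ and taking the imaginary part of the resulting scalar identity is the decisive step: because $\tilde\psi$ is real, both $\gamma\|\nabla\theta\|^2$ and $(\tilde\psi\theta,\theta)=\int_{\widetilde\Omega}\tilde\psi|\theta|^2$ are real and drop out, while $\Im[-\ri(\partial_t\theta,\theta)]=-\Re(\partial_t\theta,\theta)=-\tfrac12\frac{d}{dt}\|\theta\|^2$. This leaves
\[
\tfrac12\frac{d}{dt}\|\theta\|^2=-\Re(\partial_t\rho,\theta)-\Im(\tilde\psi\rho,\theta)\le\big(\|\partial_t\rho\|+\|\tilde\psi\|_{L^\infty}\|\rho\|\big)\|\theta\|,
\]
by Cauchy--Schwarz and H\"older, whence $\frac{d}{dt}\|\theta\|\le\|\partial_t\rho\|+\|\tilde\psi\|_{L^\infty}\|\rho\|$. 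Note that, thanks to the skew-adjoint (Schr\"odinger) structure with a real potential, no term proportional to $\|\theta\|$ survives on the right-hand side, so a bare time integration suffices and Gr\"onwall's inequality is not even needed.

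Integrating from $t_0$ to $t$ gives $\|\theta(t)\|\le\|\theta(t_0)\|+\int_{t_0}^t(\|\partial_t\rho\|+\|\tilde\psi\|_{L^\infty}\|\rho\|)\,ds$. The initial term is handled by writing $\theta(t_0)=(\hat u_N(t_0)-\hat u(t_0))-\rho(t_0)$; since the discrete initial datum in \eqref{GLP-1} is the $L^2$ projection of $\hat u(t_0)$, Lemma \ref{lemmaE1} bounds $\|\hat u_N(t_0)-\hat u(t_0)\|$ and Lemma \ref{lemmaE3} bounds $\|\rho(t_0)\|$, both by $CN^{-m}\|\hat u(t_0)\|_m$. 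Applying the two estimates of Lemma \ref{lemmaE3} to $\rho$ and $\partial_t\rho$ under the integral and absorbing the fixed constants $T-t_0$ and $\|\tilde\psi\|_{L^\infty}$, I get $\|\theta(t)\|\le CN^{-m}(\|\hat u\|_{L^\infty(J;H^m)}+\|\partial_t\hat u\|_{L^\infty(J;H^m)})$. The triangle inequality $\|\hat u_N-\hat u\|\le\|\theta\|+\|\rho\|$ together with the $\rho$-bound of Lemma \ref{lemmaE3} then yields the claimed estimate.

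I expect the main obstacle to be the energy step, specifically getting the complex-conjugate bookkeeping exactly right so that the imaginary part really does eliminate the gradient and potential terms; this is precisely where the hypothesis that $\tilde\psi$ is real is essential, since a genuinely complex potential would leave a term $\Im(\tilde\psi\theta,\theta)$ on the left and force a Gr\"onwall closure with an exponential-in-$T$ constant. A secondary point to verify carefully is the treatment of the initial data, because the scheme initializes with the $L^2$ projection rather than the elliptic projection, so $\theta(t_0)$ must be estimated through the auxiliary splitting above rather than being taken to be zero.
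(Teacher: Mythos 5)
Your proposal is correct and follows essentially the same route as the paper: the elliptic-projection splitting \eqref{EE1}, the error equation tested with $v=\theta$, the imaginary-part trick exploiting that $\tilde\psi$ is real, the bound on $\|\theta(t_0)\|$ via the $L^2$ and elliptic projections, and Lemma \ref{lemmaE3} for $\rho$. The only (harmless) difference is that you keep the right-hand side in product form and divide by $\|\theta\|$, so you integrate directly where the paper applies Young's inequality followed by Gr\"onwall's lemma.
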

	\begin{proof}
		With $\hat{u}_N$ and $\hat{u}$ satisfies the following equation
		\begin{equation*}
		-\ri(\partial
		{_t}\hat{u}_N, v)+\gamma\mathcal{A}(
		\hat{u}_N, v)-(\tilde{\psi}(\tilde{x},\tilde{y})\hat{u}_N,v)-(\hat{f}(\tilde{x},\tilde{y},t),v)=0,\quad \forall v \in V_N^2,
		\end{equation*}
		and
		\begin{equation*}
		-\ri(\partial
		{_t}\hat{u}, v)+\gamma\mathcal{A}(
		\hat{u}, v)-(\tilde{\psi}(\tilde{x},\tilde{y})\hat{u},v)-(\hat{f}(\tilde{x},\tilde{y},t),v)=0,\quad \forall v \in V_N^2.
		\end{equation*}
		Hence with $v=\theta$, we obtain
		\begin{equation*}\begin{split}
		-\ri(\partial
		{_t}(\theta+\rho), v)+\gamma\mathcal{A}(
		\theta+\rho, v)-(\tilde{\psi}(\tilde{x},\tilde{y})(\theta+\rho),v)=0,\quad \forall v \in V_N^2.
		\end{split}\end{equation*}
		Taking both sides the imaginary, we obtain
		\begin{equation*}\begin{split}
		\frac{1}{2}\frac{d}{dt}\|\theta\|^2 = -{\rm Re}(\partial
		{_t}\rho, \theta)-{\rm Im}(\tilde{\psi}(\tilde{x},\tilde{y})\rho,\theta),
		\end{split}\end{equation*}
		such that
		\begin{equation*}
		\frac{1}{2}\frac{d}{dt}\|\theta\|^2\le C(\|\theta\|^2+\|\rho\|^2+\|\partial
		{_t}\rho\|^2).
		\end{equation*}
		After integration, this shows
		\begin{equation*}
		\|\theta(t)\|^2\le C\|\theta(t_{0})\|^2
		+C\int^t_{t_0}(\|\theta\|^2+\|\rho\|^2+\|\partial
		{_s}\rho\|^2)ds.
		\end{equation*}
		By using the Gronwall's Lemma, we have
		\begin{equation*}
		\|\theta(t)\|^2\le C\|\theta(t_0)\|^2
		+C\int^t_{t_0}(\|\rho\|^2+\|\partial
		{_s}\rho\|^2)ds.
		\end{equation*}
		By Lemma \ref{lemmaE3}, we have
		\begin{equation*}\begin{split}
		\|\theta(t)\|^2\le &C\|\theta(t_0)\|^2 +CN^{-2m}(T-t_0) \big(\|\hat{u}\|_{L^\infty(J;H^m(\widetilde{\Omega}))}^2+
		\|\partial
		{_t}\hat{u}\|_{L^\infty(J;H^m(\widetilde{\Omega}))}^2 \big)\\
		\le &C_5N^{-2m}\big(\|\hat{u}\|_{L^\infty(J;H^m(\widetilde{\Omega}))}^2
		+\|\partial
		{_t}\hat{u}\|_{L^\infty(J;H^m(\widetilde{\Omega}))}^2\big).
		\end{split}\end{equation*}
		Using  Lemmas \ref{lemmaE1} and \ref{lemmaE2}, we obtain
		\begin{equation*}\begin{split}
		\|\theta(t_0)\|=&\| \hat{u}_N(t_0)-\mathbb{P}^0_N \hat{u}(t_0)\|\le\|\Pi^0_N\hat{u}(t_0)-\hat{u}(t_0)\|+\|\mathbb{P}^0_N \hat{u}(t_0)-\hat{u}(t_0)\|
		\\
		\le& CN^{-m}\big(\|\hat{u}\|_{L^\infty(J;H^m(\widetilde{\Omega}))}
		+\|\partial
		{_t}\hat{u}\|_{L^\infty(J;H^m(\widetilde{\Omega}))}\big).
		\end{split}\end{equation*}
		Then we have
		\begin{equation*}\begin{split}
		\|\hat{u}_N(t)-\hat{u}(t)\|\le& \|\theta(t)\|+\|\rho(t)\|\le
		CN^{-m}\big(\|\hat{u}\|_{L^\infty(J;H^m(\widetilde{\Omega}))}
		+\|\partial
		{_t}\hat{u}\|_{L^\infty(J;H^m(\widetilde{\Omega}))}\big),
		\end{split}\end{equation*}
		this complete the proof.
	\end{proof}
	
	\section{Numerical results}
	
	In this section, we present numerical examples to demonstrate the convergence and accuracy of the new method.
	
	For a given $N$, we denote the discrete $L^2$-error by
	\begin{equation*}
	\textup{error}(t) =\Big( \sum_{k=0}^{N}\sum_{k=0}^{N}\big(v_N(x_k,y_m,t)-v(x_k,y_m,t)\big)^2\omega_{km}\Big)^{\frac{1}{2}}.
	\end{equation*}
	where $v =\text{Re}(u)$ or $v =\text{Im}(u)$, $x_k$, $y_m$ are Legendre-Gauss-Lobatto quadrature nodes,
	$\omega_{km}$ are quadrature weights in $\Omega$ (see, e.g., \cite[Theorem 3.29]{Shen2011Book})
	
	\subsection{Test Problem 1}
	We consider problem \eqref{Introduction-1} with $c= 0$, $d= 1$, $t_0=0$, $T=1$, $\psi(x,y) =3-2\tanh^2(x)-2\tanh^2(y)$ and the following initial condition
	\begin{equation*} \phi(x,y) = \frac{\ri}{\cosh(x)\cosh(y)},\ x \in (c,d)\times (c,d).
	\end{equation*}
	The exact solution is given by
	\begin{align}
	\label{test-1}
	u(x,y,t) =\frac{\ri\exp(\ri t)}{\cosh(x)\cosh(y)}.
	\end{align}
	The boundary conditions can be obtained easily from \eqref{test-1}.
	
	This test problem is given in \cite{Mohebbi2009paper}.
	Figure \ref{fig-1} shows the
	surface plot of absolute error for Test Problem 1 with $N=18$ and $h=1/20$ at $T=1$.
	Figure \ref{fig-2} shows the
	surface plot of absolute error for Test Problem 1
	by using the method of \cite{Mohebbi2009paper} with $N_{x}=N_{y}=\frac{d-c}{ \Delta x } =\frac{d-c}{ \Delta y } =20$ and $\Delta t=1/20$.
	Comparing Figure \ref{fig-1} with Figure \ref{fig-2}, we can see that our method
	is more accurate than the algorithm of \cite{Mohebbi2009paper}.
	Figure \ref{fig-3} shows the convergence rates for Test problem 1 with $h = 1/50$ at $T=1$.
	The curves show exponential rates of convergence in space.
	\begin{figure}[!htp]
		\centering
		\includegraphics[width=7cm]{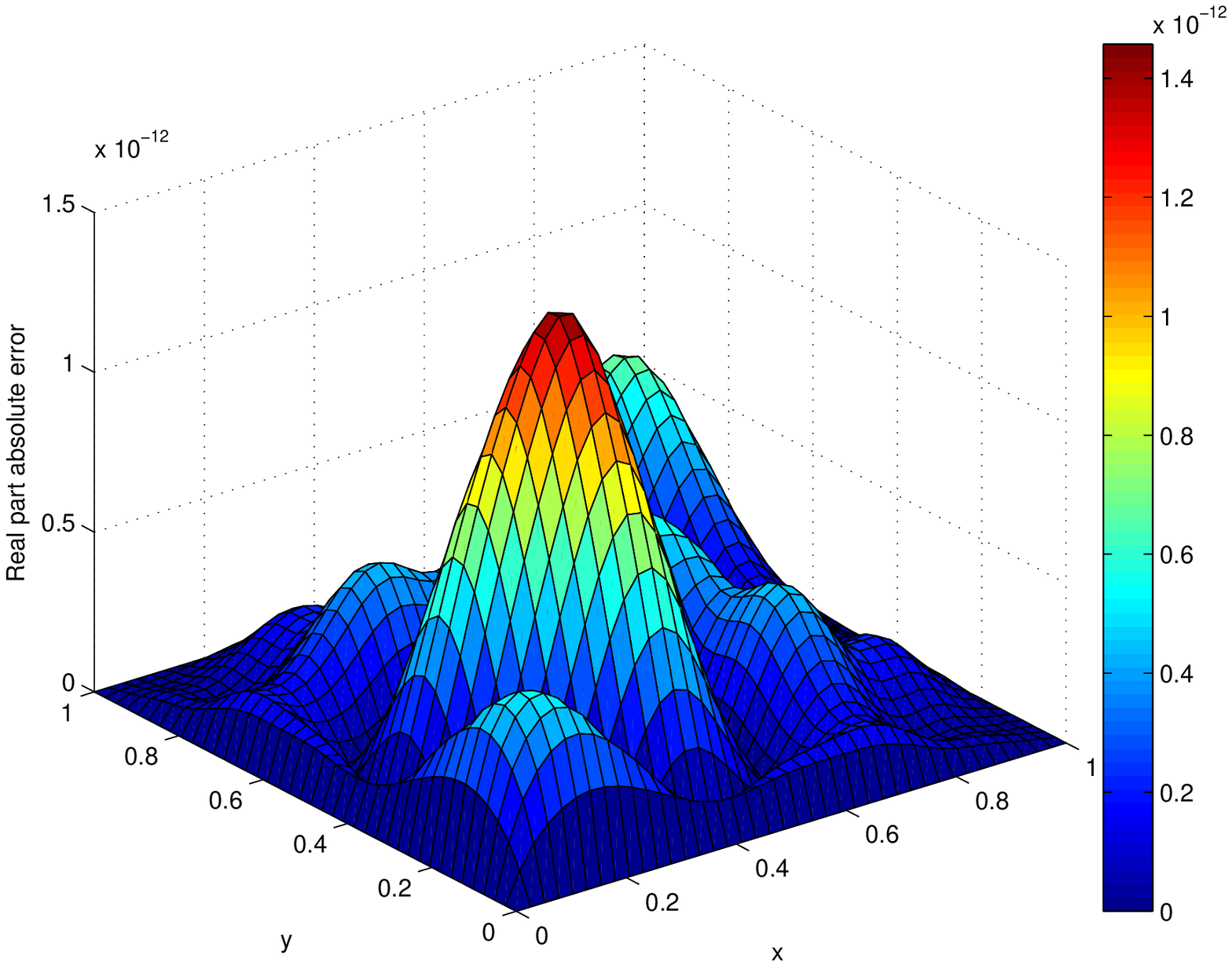}
		\quad
		\includegraphics[width=7cm]{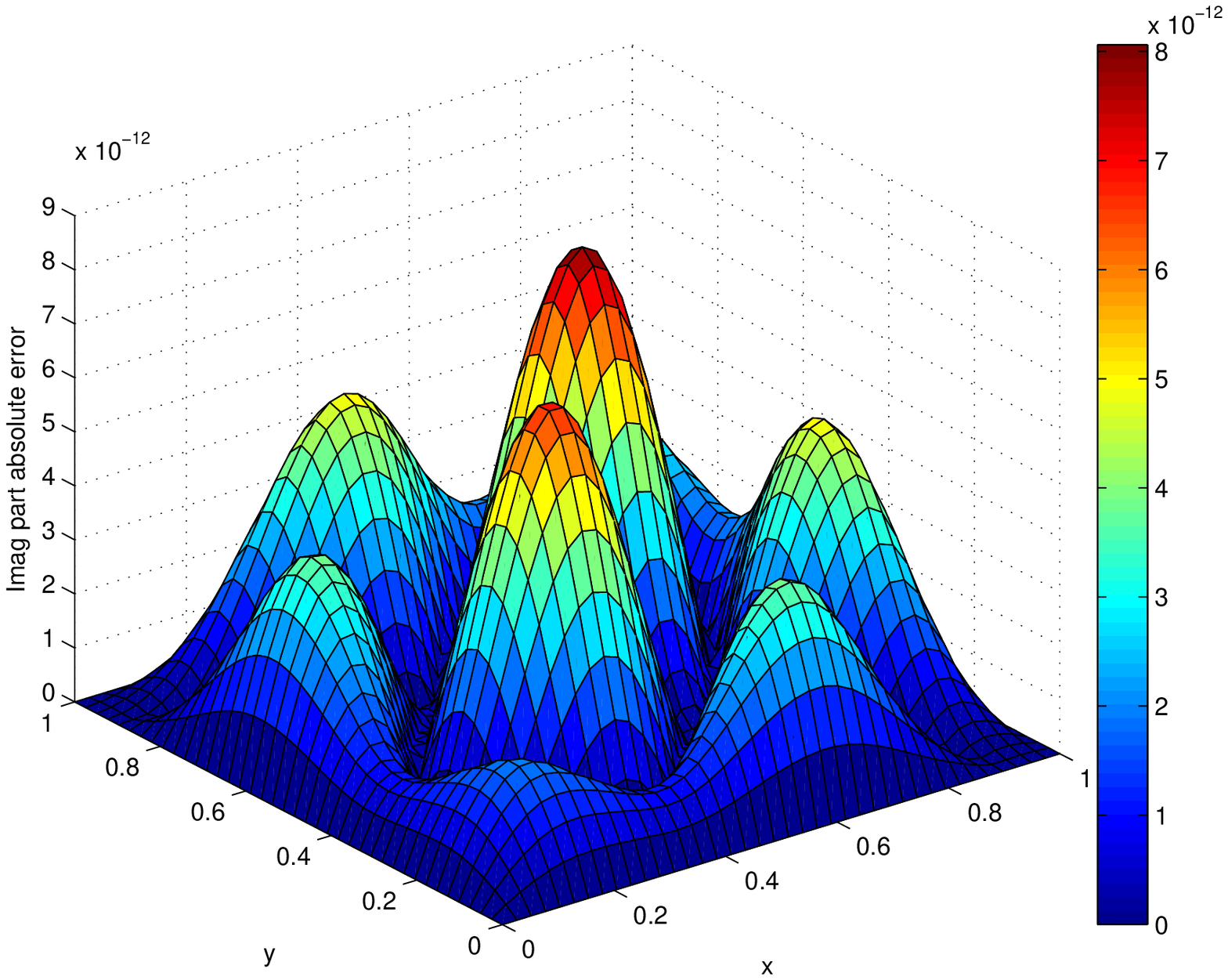}
		\caption{Surface plot of absolute error obtained for Test problem 1 in time with $N=18$ and $h=1/20$ (left panel for real part and right panel for imaginary part).}
		\label{fig-1} 
	\end{figure}
	\begin{figure}[!htp]
		\centering
		\includegraphics[width=7cm]{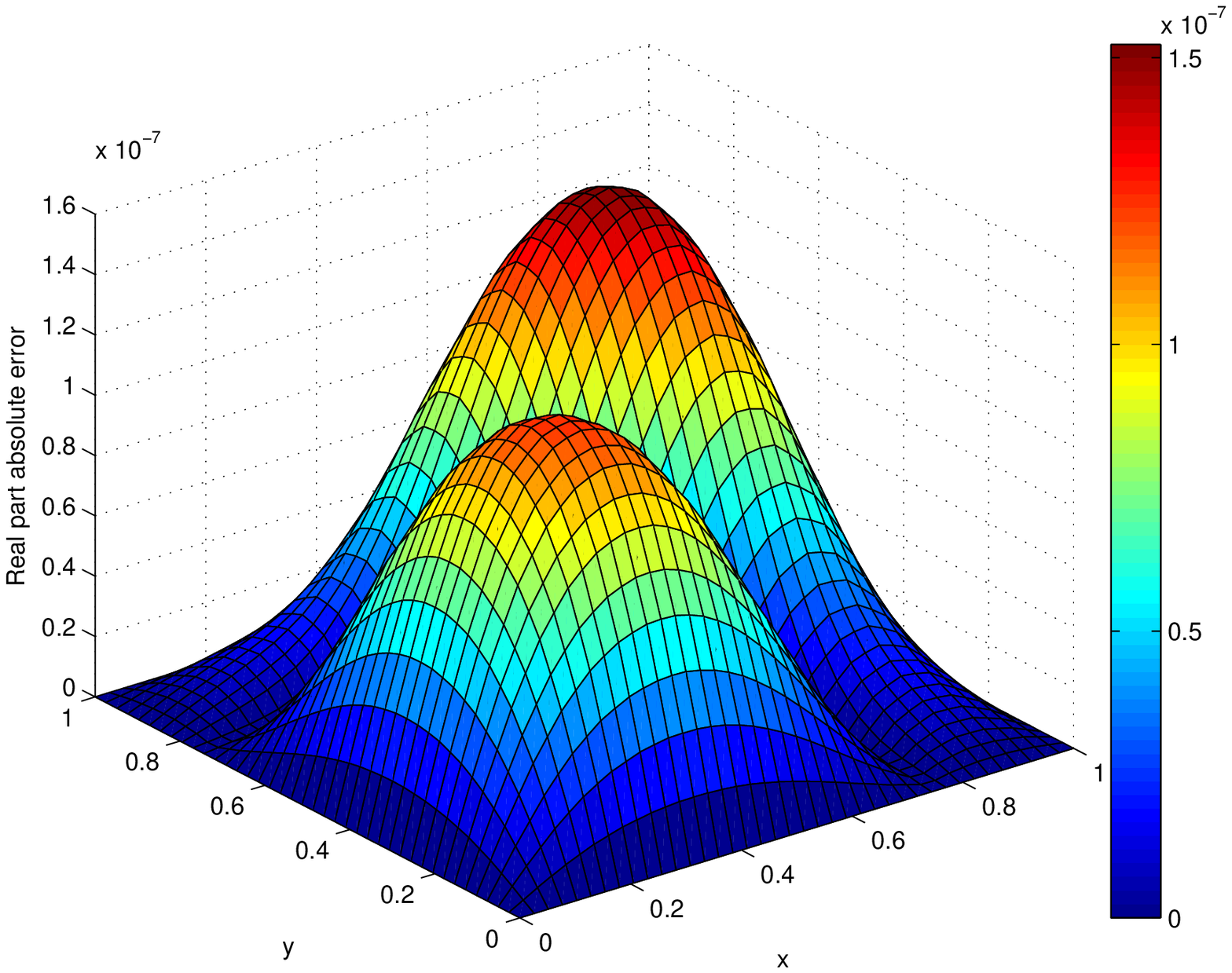}
		\quad
		\includegraphics[width=7cm]{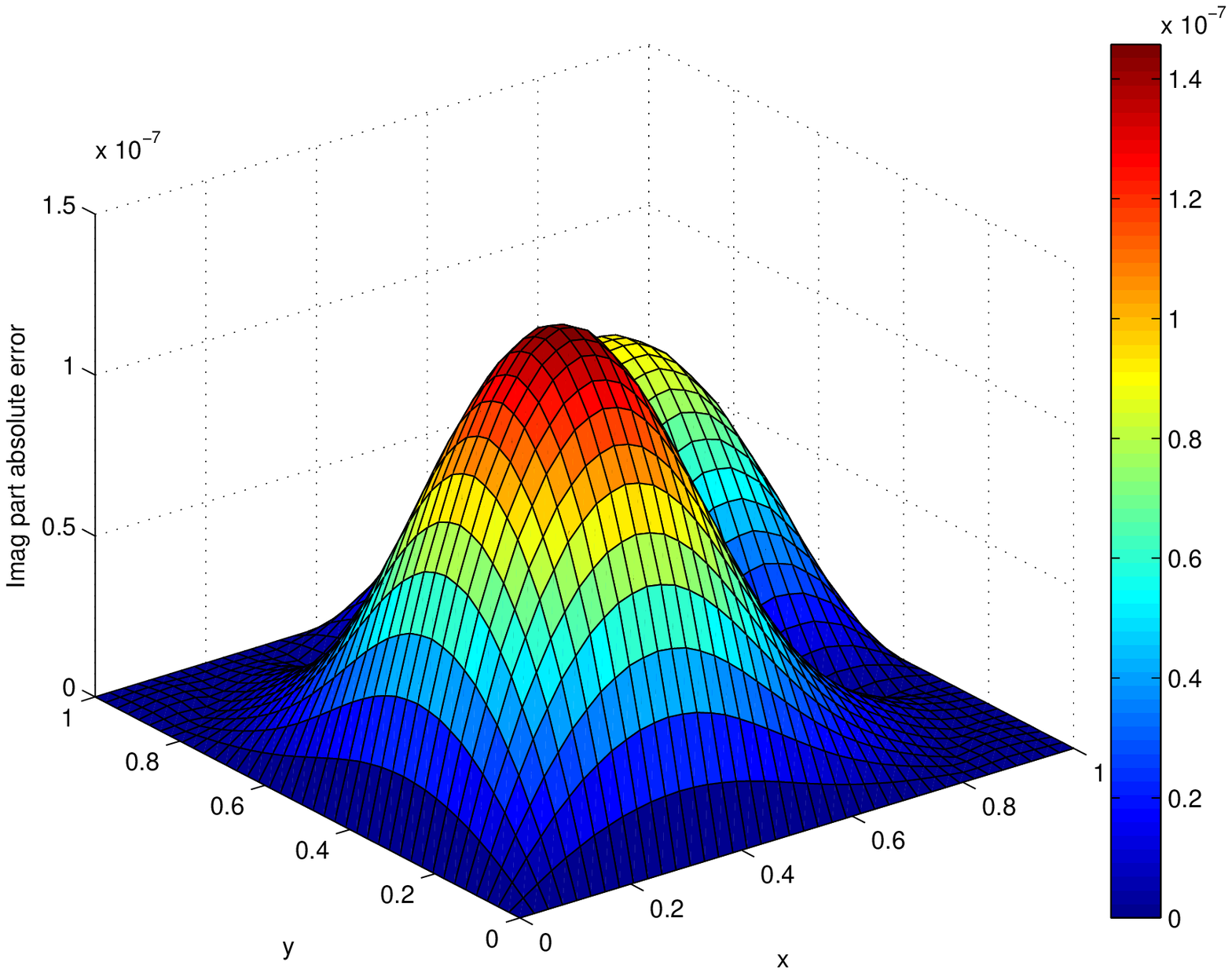}
		\caption{Surface plot of absolute error obtained for Test problem 1 by using the method of \cite{Mohebbi2009paper} with $N_x=N_y=20$ and $\triangle t=1/20$ (left panel for real part and right panel for imaginary part).}
		\label{fig-2} 
	\end{figure}
	\begin{figure}[!htp]
		\centering	
		\includegraphics[width=7cm]{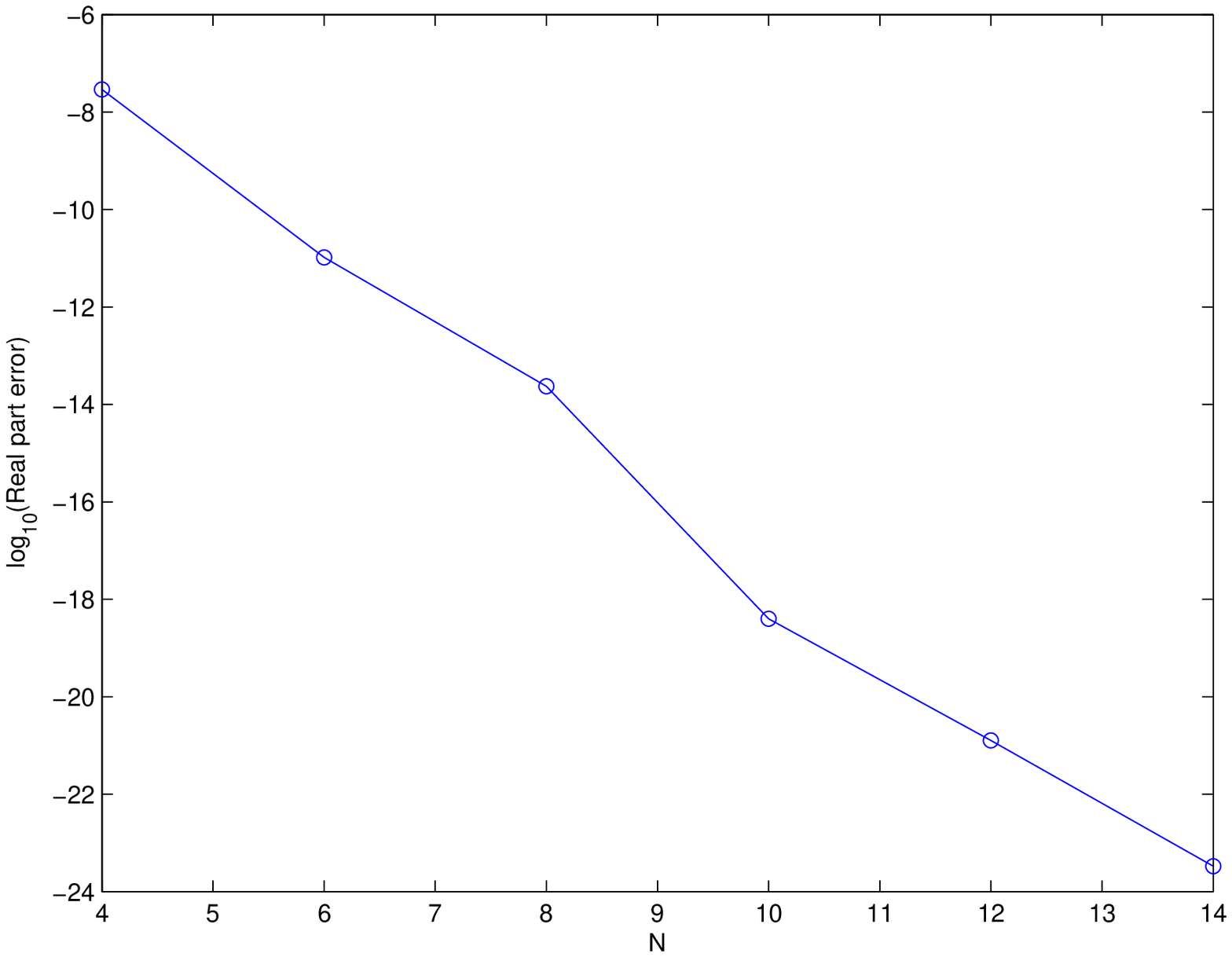}
		\quad
		\includegraphics[width=7cm]{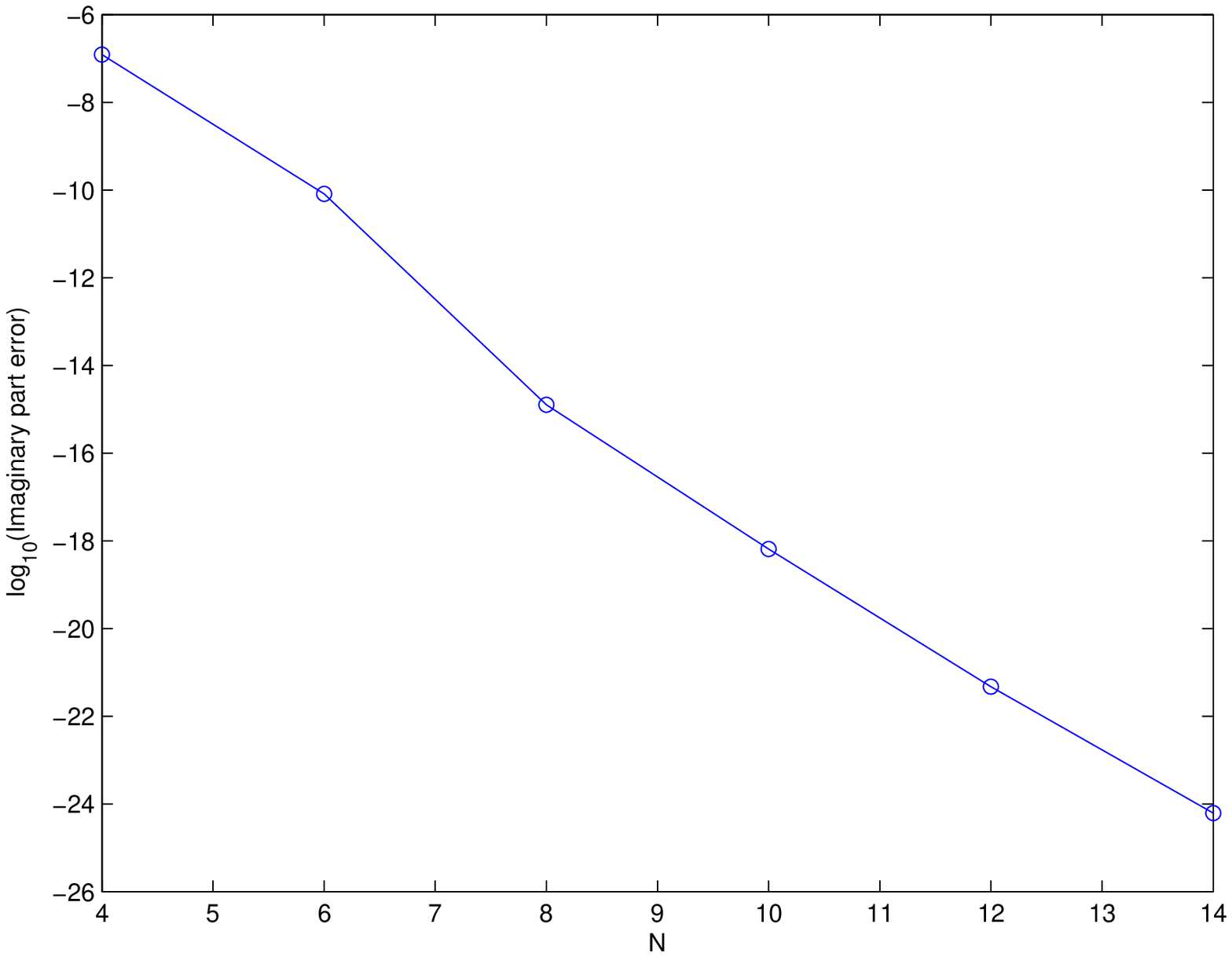}
		\caption{Exponential convergence in $N$ for Test problem 1 with $h =1/50$ (left panel for real part and right panel for imaginary part).}
		\label{fig-3} 
	\end{figure}
	
	\subsection{Test Problem 2}
	We consider problem \eqref{Introduction-1} with $c= -2.5$, $d= 2.5$, $t_0=0$, $T=1$, $\psi(x,y) =0$ and the following initial condition
	\begin{equation*} \phi(x,y) = \exp\big(-\ri k_0x-(x^2+y^2)\big),\ x \in (c,d)\times (c,d).
	\end{equation*}
	The exact solution is given by
	\begin{align}
	\label{test-2}
	u(x,y,t) =\frac{\ri}{\ri-4t} \exp\Big(   -\frac{\ri(x^2+y^2+\ri k_0x+\ri k_0^2 t)}{\ri-4t}\Big ).
	\end{align}
	The boundary conditions can be obtained easily from \eqref{test-2}.
	
	This test problem is given in \cite{Mohebbi2009paper}.
	Figure \ref{fig-4} shows the
	surface plot of absolute error for Test Problem 2
	with $N=25$ and $h=1/25$ at $T=1$.
	Figure  \ref{fig-5} shows the convergence rates for Test problem 2 with $h =1/100$ at $T=1$.
	Table \ref{tab-1} shows the
	absolute errors for Test Problem 2
	with $N=25$ and $h=1/20$.
	Table \ref{tab-2} shows the
	absolute errors for Test Problem 2
	by using the method of \cite{Mohebbi2009paper} with
	$N_{x}=N_{y}=\frac{d-c}{\Delta  x } =\frac{d-c}{\Delta  y } =50$ and $\Delta  t=1/20$ (see \cite[Tab. 8]{Mohebbi2009paper}).
	Comparing Table \ref{tab-1} with Table \ref{tab-2}, we can see our method
	is better than the method of the \cite{Mohebbi2009paper}.
	\begin{figure}[!htp]
		\centering
		\subfigure{
			\includegraphics[width=7cm]{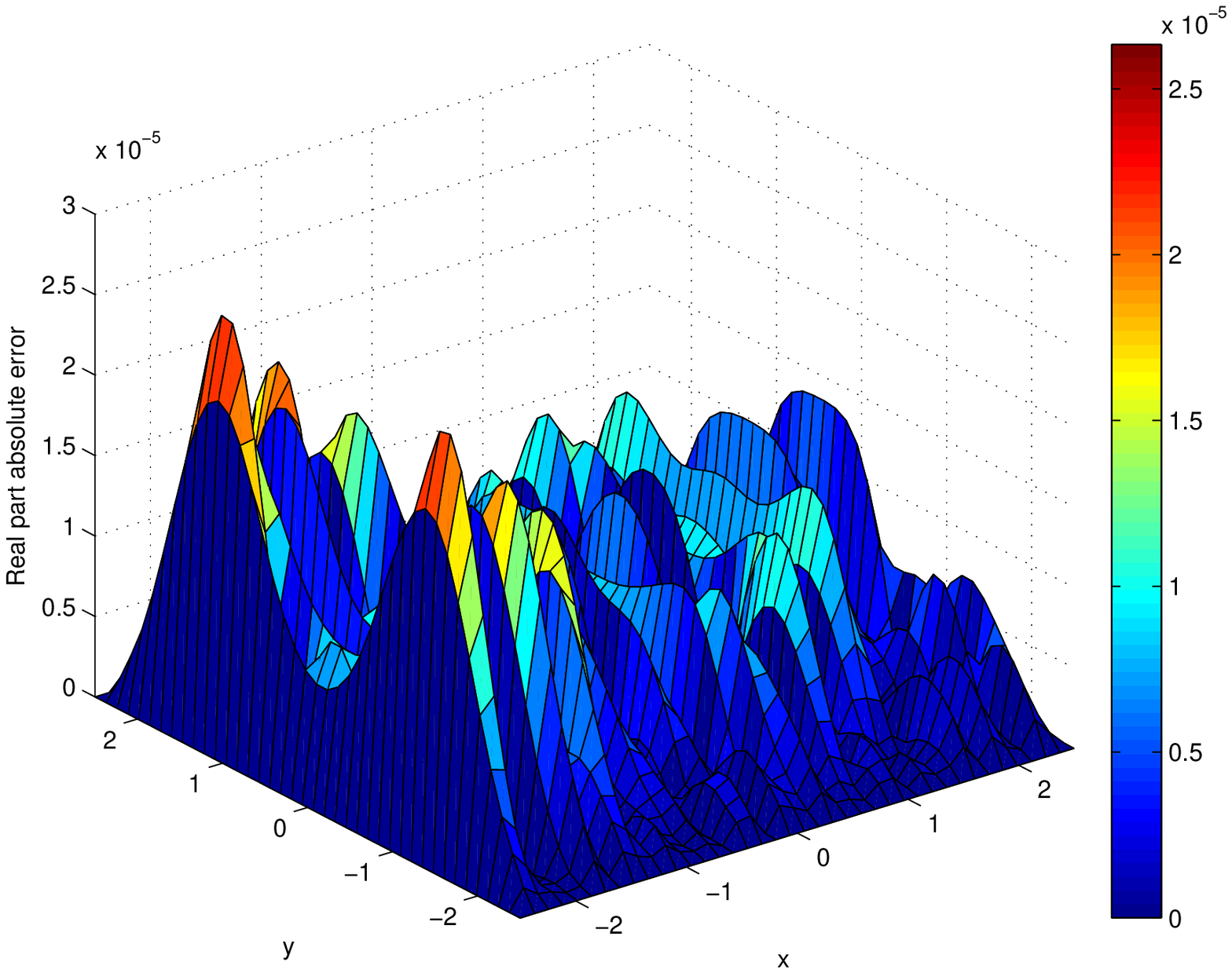}}
		\subfigure{
			\includegraphics[width=7cm]{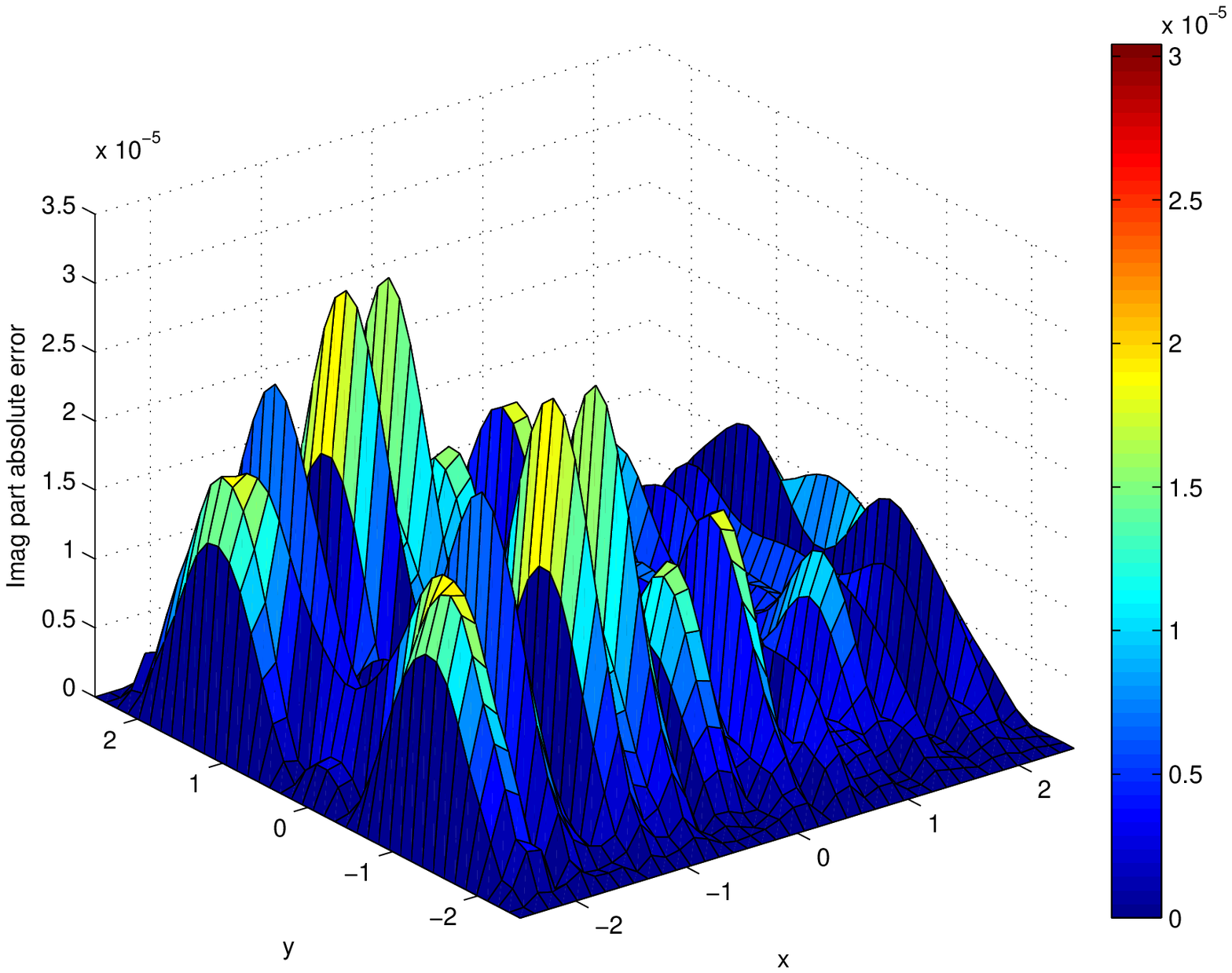}}
		\caption{Surface plot of absolute error obtained for Test problem 2 with $N=25$ and $h=1/25$ (left panel for real part and right panel for imaginary part).}
		\label{fig-4} 
	\end{figure}
	\begin{figure}[!htp]
		\centering
		\includegraphics[width=7cm]{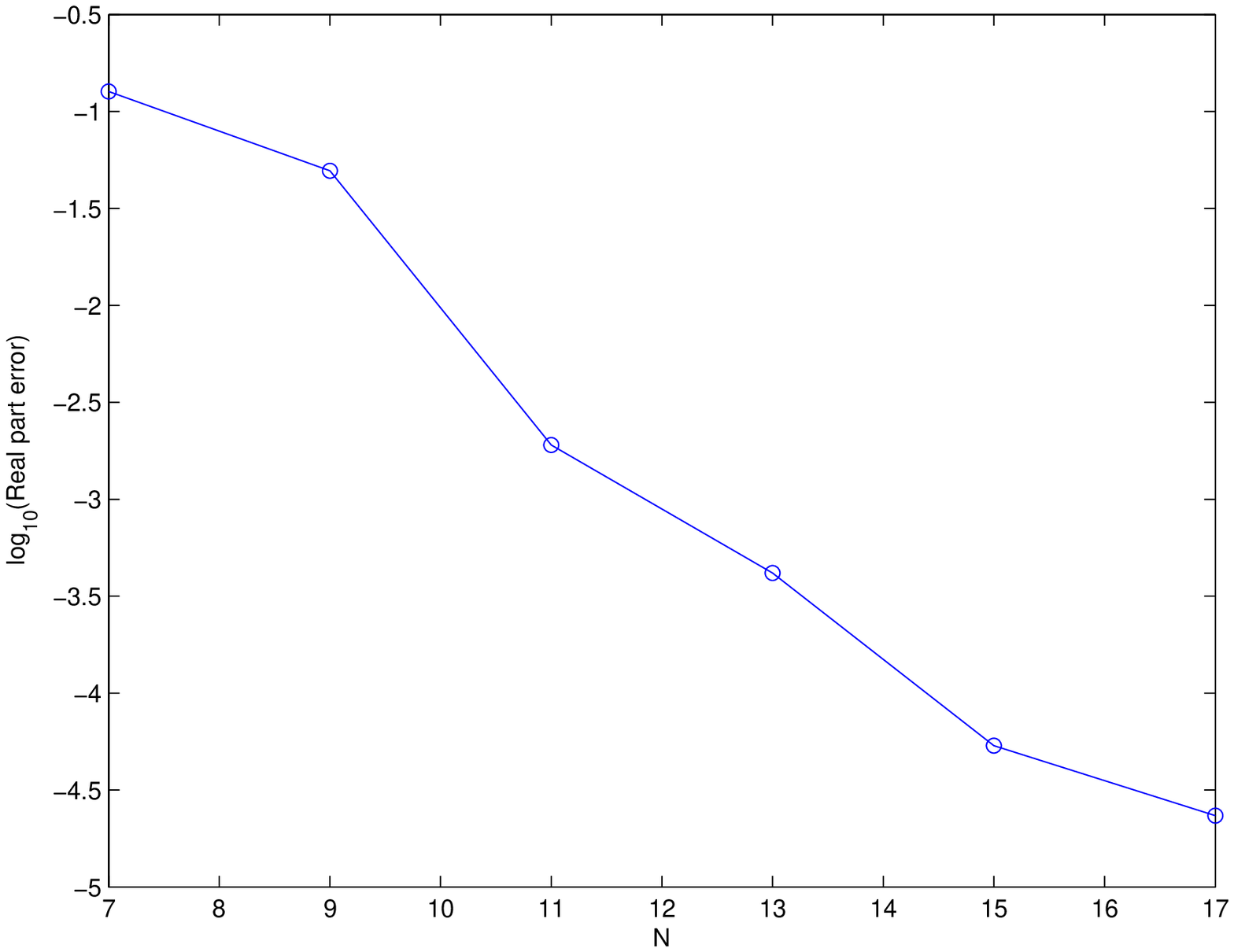}
		\quad
		\includegraphics[width=7cm]{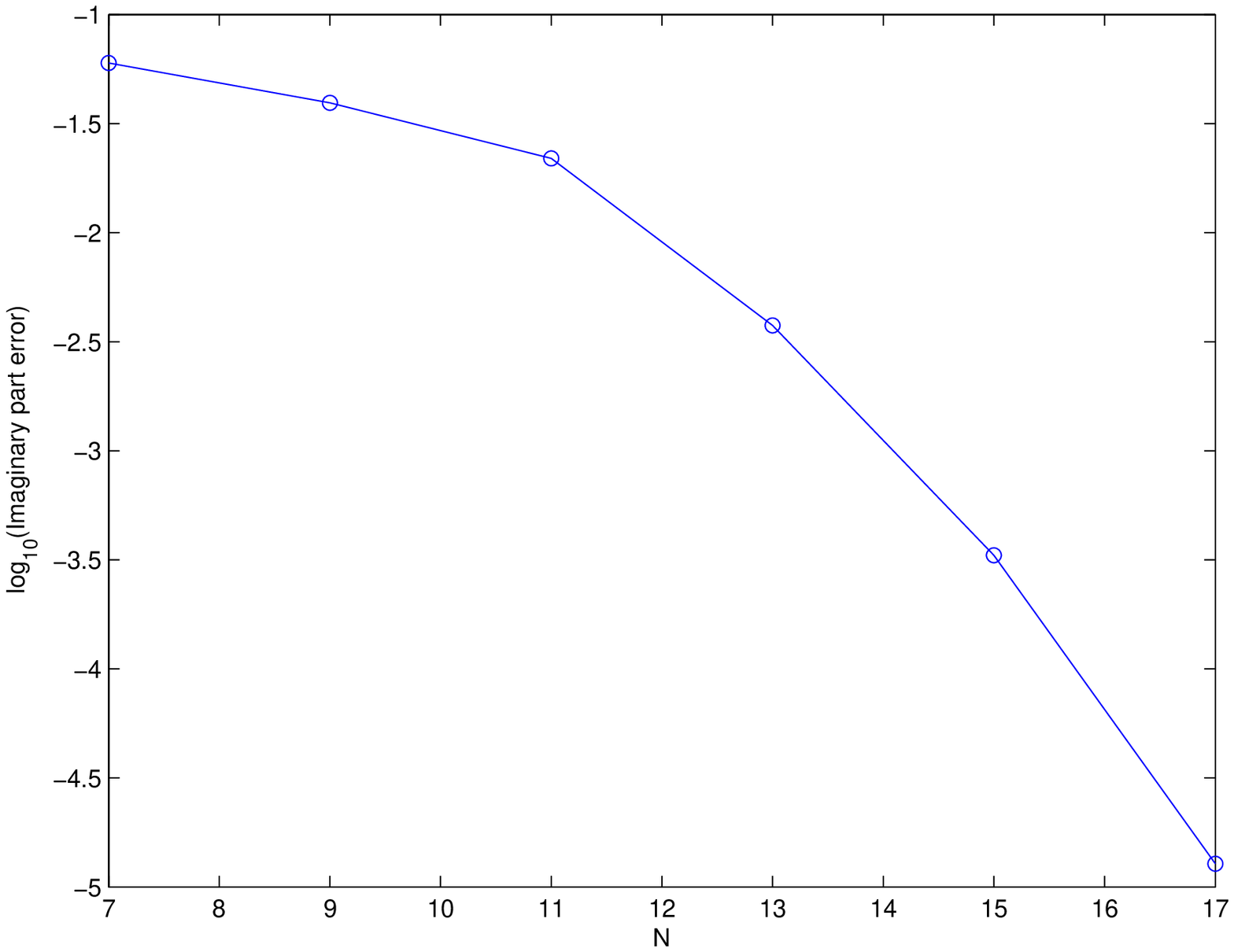}
		\caption{Exponential convergence in $N$ for Test problem 2 with $h =1/100$ (left panel for real part and right panel for imaginary part).}
		\label{fig-5}
	\end{figure}
\begin{table}[!htp]
	\footnotesize
	\caption{Absolute errors for solving Test problem 2 with $N=25$ and $h =1/20$.}
	\label{tab-1}
	\centering
	\begin{tabular}
		{ccccccccccc} \hline
		$t$ &\multicolumn{2}{c}{Maximum absolute error} &\multicolumn{2}{c}{Average absolute error}
		\\
		\cmidrule(lr){2-3}\cmidrule(lr){4-5}
		\, &Real part &Imaginary part &Real part &Imaginary part
		\\ \hline
		0.10 & 5.5837e-05 & 7.2420e-05 & 5.2562e-06 & 6.5224e-06 \\
		0.25 & 1.1025e-04 & 1.6687e-04 & 1.3543e-05 & 1.2252e-05 \\
		0.50 & 6.4010e-05 & 6.5695e-05 & 1.8633e-05 & 1.8118e-05 \\
		0.75 & 6.6335e-05 & 8.7873e-05 & 1.8833e-05 & 1.8476e-05 \\
		1.00 & 8.9998e-05 & 9.2257e-05 & 1.4600e-05 & 1.6865e-05 \\
		\hline
	\end{tabular}
\end{table}

\begin{table}[!htp]	
	\footnotesize
	\caption{Absolute error for solving Test problem 2 by using the method of \cite[Table 8]{Mohebbi2009paper} with $N_x=N_y=50$ and $\Delta t=1/20$.}
	\label{tab-2}
	\centering
	\begin{tabular}
		{ccccccccccc} \hline
		t &\multicolumn{2}{c}{Maximum absolute error} &\multicolumn{2}{c}{Average absolute error}
		\\
		\cmidrule(lr){2-3}\cmidrule(lr){4-5}
		\, &Real part &Imaginary part &Real part &Imaginary part
		\\ \hline
		0.10 & 5.6517e-05 & 5.7493e-05 & 9.4025e-06 & 9.0740e-06 \\
		0.25 & 2.6182e-04 & 1.1500e-04 & 2.2048e-05 & 1.1013e-05 \\
		0.50 & 1.2792e-04 & 1.3972e-04 & 2.8362e-05 & 3.4913e-05 \\
		0.75 & 1.3312e-04 & 1.2511e-04 & 3.5252e-05 & 2.8212e-05 \\
		1.00 & 1.3647e-04 & 9.8227e-05 & 3.7975e-05 & 3.1640e-05 \\
		\hline
	\end{tabular}
\end{table}

	\section{Concluding remarks}
	In the paper, we proposed a Galerkin-Legendre spectral method with implicit Runge-Kutta method
	for two-dimensional linear Schr\"{o}dinger equation
	with the nonhomogeneous Dirichlet boundary conditions and initial condition.
	Optimal a priori error bounds are derived in the
	$L^2$-norm for the semidiscrete formulation.
	Our numerical results confirm the exponential convergence
	in space.
	

\begin{thebibliography}{10}
\expandafter\ifx\csname url\endcsname\relax
  \def\url#1{\texttt{#1}}\fi
\expandafter\ifx\csname urlprefix\endcsname\relax\def\urlprefix{URL }\fi
\expandafter\ifx\csname href\endcsname\relax
  \def\href#1#2{#2} \def\path#1{#1}\fi

\bibitem{Mohebbi2009paper}
A.~Mohebbi, M.~Dehghan, The use of compact boundary value method for the
  solution of two-dimensional {Schr\"{o}dinger} equation, J. Comput. Appl.
  Math. 225 (2009) 124--134.

\bibitem{Abdur2011paper}
R.~Abdur, A.~Ismail, Numerical studies on two-dimensional {Schr\"{o}dinger}
  equation by chebyshev spectral collocation method, Sci. Bull. Politeh. Univ.
  Buchar. Ser. A 73 (2011) 101--110.

\bibitem{Dehghan2013paper}
M.~Dehghan, F.~Emami-Naeini, The sinc-collocation and sinc-galerkin methods for
  solving the two-dimensional {Schr\"{o}dinger} equation with nonhomogeneous
  boundary conditions, Appl. Math. Model. 73 (2013) 9379--9397.

\bibitem{Subasi2002paper}
M.~Subasi, On the finite-difference schemes for the numerical solution of two
  dimensional {Schr\"{o}dinger} equation, Numer. Methods Partial Differ. Equ.
  18 (2002) 752--758.

\bibitem{Antoine2004paper}
X.~Antoine, C.~Besse, V.~Mouysset, Numerical schemes for the simulation of the
  two-dimensional {Schr\"{o}dinger} equation using non-reflecting boundary
  conditions, Math. Comput. 73 (2004) 1779--1799.

\bibitem{Kalita2006paper}
J.~Kalita, P.~Chhabra, S.~Kumar, A semi-discrete higher order compact scheme
  for the unsteady two-dimensional {Schr\"{o}dinger} equation, J. Comput Appl.
  Math. 197 (2006) 141--149.

\bibitem{Dehghan2007paper}
M.~Dehghan, A.~Shokri, A numerical method for two-dimensional {Schr\"{o}dinger}
  equation using collocation and radial basis functions, Comput. Math. Appl. 54
  (2007) 136--146.

\bibitem{Dehghan2008paper}
M.~Dehghan, D.~Mirzaei, {T}he dual reciprocity boundary element method
  ({DRBEM}) for two-dimensional sine-{G}ordon equation, Comput. Methods Appl.
  Mech. Engrg. 197 (2008) 476--486.

\bibitem{Tian2010paper}
Z.~Tian, P.~Yu, High-order compact adi (hoc-adi) method for solving unsteady 2d
  {Schr\"{o}dinger} equation, Comput. Phys. Commun. 181 (2010) 861--868.

\bibitem{Gao2011paper}
Z.~Gao, S.~Xie, Fourth-order alternating direction implicit compact finite
  difference schemes for two-dimensional {Schr\"{o}dinger} equations, Appl.
  Numer. Math. 61 (2011) 593--614.

\bibitem{Shen2006Book}
J.~Shen, T.~Tang, {S}pectral and {H}igh-Order {M}ethods with {A}pplications,
  Science Press, Beijing, 2006.

\bibitem{Shen2011Book}
J.~Shen, T.~Tang, {L.L. Wang}, {S}pectral {M}ethods: {A}lgorithms, {A}nalysis
  and {A}pplications, Springer-Verlag, New York, 2011.

\bibitem{shj1}
J.~Shen, {E}fficient spectral-{G}alerkin method.
  \uppercase\expandafter{\romannumeral1}. {D}irect solvers of second- and
  fourth-order equations using {L}egendre polynomials, SIAM J. Sci. Comput. 15
  (1994) 1489--1505.

\bibitem{Lions1972Book}
{J.L. Lions}, E.~Magenes, {N}on-{H}omogeneous {B}oundary {V}alue {P}roblems and
  {A}pplications, {V}ol.\uppercase\expandafter{\romannumeral1},
  Springer-Verlag, New York, 1972.

\bibitem{Thomee2006Book}
V.~Thom\'{e}e, {G}alerkin {F}inite {E}lement {M}ethods for {P}arabolic
  Problems, 2nd Ed, Springer-Verlag, Berlin Heidelberg, 2006.

\bibitem{Steeb2006Book}
{W.H. Steeb}, {P}roblems and {S}olutions in {I}ntroductory and {A}dvanced
  {M}atrix {C}alculus, World Scientific Publishing, London, 2006.

\bibitem{Butcher}
{J.C. Butcher}, {I}mplicit {R}unge {K}utta {P}rocesses, Math. Comp. 18 (1964)
  50--64.

\bibitem{Canuto1987Book}
C.~Canuto, {M.Y. Hussaini}, A.~Quarteroni, {T.A. Zang}, {S}pectral {M}ethods in
  {F}luid {M}echanics, Springer-Verlag, New York, 1988.

\end{thebibliography}
	
\end{document}